\tikzset{every loop/.style={min distance=10mm,looseness=10}}
\tikzset{every state/.style={minimum size=2mm}}
\newtheorem{theorem}{Theorem}
\newtheorem{lemma}[theorem]{Lemma}
\newtheorem{problem}{Problem}
\newtheorem{remark}[theorem]{Remark}
\title{On semi-transitive orientability of triangle-free graphs}
\author{Sergey Kitaev\footnote{Department of Mathematics and Statistics, University of Strathclyde, 26 Richmond Street, Glasgow G1, 1XH, United Kingdom. 
{\bf Email:} sergey.kitaev@strath.ac.uk.}\ \ and Artem Pyatkin\footnote{Sobolev Institute of Mathematics, Koptyug ave, 4, Novosibirsk, 630090, Russia}\ \footnote{Novosibirsk State University, Pirogova str. 2, Novosibirsk, 630090, Russia. {\bf Email:} artem@math.nsc.ru.}}
\begin{document}

\maketitle

\begin{abstract}
An orientation of a graph is {\it semi-transitive} if it is acyclic, and for any directed path $v_0\rightarrow v_1\rightarrow \cdots\rightarrow v_k$ either there is no arc between $v_0$ and $v_k$, or $v_i\rightarrow v_j$ is an arc for all $0\leq i<j\leq k$. An undirected graph is semi-transitive if it admits a semi-transitive orientation. Semi-transitive graphs generalize several important classes of graphs and they are precisely the class of word-representable graphs studied extensively in the literature.  

Determining if a triangle-free graph is semi-transitive is an NP-hard problem. The existence of non-semi-transitive triangle-free graphs was established via Erd\H{o}s' theorem by Halld\'{o}rsson and the authors in 2011. However, no explicit examples of such graphs were known until recent work of the first author and Saito who have shown computationally that a certain subgraph on 16 vertices of the triangle-free Kneser graph $K(8,3)$ is not semi-transitive, and have raised the question on the existence of smaller triangle-free non-semi-transitive graphs. In this paper we prove that the smallest triangle-free 4-chromatic graph on 11 vertices (the Gr\"otzsch graph) and the smallest triangle-free 4-chromatic 4-regular graph on 12 vertices (the Chv\'atal graph) are not semi-transitive. Hence, the Gr\"otzsch graph is the smallest triangle-free non-semi-transitive graph. We also prove the existence of  semi-transitive graphs of girth 4 with chromatic number 4 including a small one (the circulant graph $C(13;1,5)$ on 13 vertices) and dense ones (Toft's graphs). Finally, we show that each $4$-regular circulant graph (possibly containing triangles) is semi-transitive.\\

\noindent
{\bf Keywords:}  semi-transitive orientation, triangle-free graph, Gr\"otzsch graph, Mycielski graph, Chv\'atal graph, Toft's graph, circulant graph, Toeplitz graph \\

\noindent {\bf 2010 Mathematics Subject Classification:} 05C62
\end{abstract}

\section{Introduction}
An orientation of a graph is {\em semi-transitive} if it is acyclic, and for any directed path $v_0\rightarrow v_1\rightarrow \cdots \rightarrow v_k$ either there is no arc between $v_0$ and $v_k$, or $v_i\rightarrow v_j$ is an arc for all $0\leq i<j\leq k$. An undirected graph is {\em semi-transitive} if it admits a semi-transitive orientation. The notion of a semi-transitive orientation generalizes that of a {\em transitive orientation}; it was introduced by Halld\'{o}rsson, Kitaev and Pyatkin \cite{HKP11} in 2011 as a powerful tool to study {\em word-representable graphs} defined via alternation of letters in words and studied extensively in  recent years (see \cite{K17,KL15}).  The hereditary class of semi-transitive graphs is precisely the  class of word-representable graphs, and its significance is in the fact that it generalizes several important classes of graphs. In particular, we have the following useful fact.

\begin{theorem}[\cite{HKP16}]\label{3-color-graphs}  Any $3$-colourable graph is semi-transitive. \end{theorem}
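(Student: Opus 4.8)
The plan is to build an explicit semi-transitive orientation straight out of a proper $3$-colouring. Fix a proper colouring $c\colon V(G)\to\{1,2,3\}$ and orient each edge $uv$ of $G$ from the endpoint of smaller colour to the endpoint of larger colour; this is well defined since adjacent vertices get distinct colours. Call the resulting orientation $D$. The whole argument then amounts to verifying the two clauses of the definition for $D$.

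First I would check acyclicity: along any arc the value of $c$ strictly increases, so along any directed path $c$ is strictly increasing, and a directed cycle would force $c(v_0)<c(v_0)$. The same observation is the one real ingredient of the proof: since $c$ takes only the values $1,2,3$ and strictly increases along arcs, every directed path in $D$ has at most three vertices. Next I would verify the ``no shortcut'' clause. A directed path $v_0\to v_1$ of length $1$ satisfies it vacuously, and there are no directed paths on four or more vertices by the bound just established, so the only case to examine is a directed path $v_0\to v_1\to v_2$ together with an arc between $v_0$ and $v_2$. Here $c(v_0)<c(v_1)<c(v_2)$ forces $c(v_0)=1$ and $c(v_2)=3$, so that arc is oriented $v_0\to v_2$; and the arcs required by the definition, namely $v_0\to v_1$, $v_1\to v_2$ and $v_0\to v_2$, are all present. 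Hence $D$ is semi-transitive, and therefore $G$ is semi-transitive.

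There is essentially no obstacle to overcome: the argument is short, and the only step that needs a moment's thought is the bound that a directed path in $D$ spans at most three vertices, which is precisely what rules out long potential shortcuts and collapses the verification to the single length-$2$ case above.
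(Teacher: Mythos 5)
Your proof is correct: the colour-increasing orientation is acyclic, every directed path in it has at most three vertices, and the only nontrivial case (a path $v_0\to v_1\to v_2$ with an arc between its endpoints) is handled exactly as you do, since that arc must be oriented $v_0\to v_2$ and all required arcs are then present. The paper itself states this theorem without proof, citing \cite{HKP16}, and your argument is essentially the standard one from that source: orient each edge from the smaller to the larger colour class and note that the absence of directed paths on four or more vertices leaves no room for a shortcut.
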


A {\em shortcut} $C$ in a directed acyclic graph is an induced subgraph on vertices  $\{v_0,v_1,\ldots, v_k\}$ for $k\geq 3$ such that $v_0\rightarrow v_1\rightarrow \cdots \rightarrow v_k$ is a directed path, $v_0\rightarrow v_k$ is an arc, and there exist $0\leq i<j\leq k$ such that there is no arc between $v_i$ and $v_j$. The arc $v_0\rightarrow v_k$ in $C$ is called the {\em shortcutting arc}, and the path $v_0\rightarrow v_1\rightarrow \cdots \rightarrow v_k$ is the {\em long path} in $C$. Thus, an orientation is semi-transitive if and only if it is acyclic and shortcut-free. 

The following lemma is an easy, but very helpful observation that will be used many times in this paper. Note that it was first proved in \cite{AKM15} for the case of $m=4$.


\begin{lemma}[\cite{AKM15}]\label{lemma} Suppose that an undirected graph $G$ has a cycle $C=x_1x_2\cdots x_mx_1$, where $m\geq 4$ and the vertices in $\{x_1,x_2,\ldots,x_m\}$ do not induce a clique in $G$.  If $G$ is oriented semi-transitively, and $m-2$ edges of $C$ are oriented in the same direction (i.e. from $x_i$ to $x_{i+1}$ or vice versa, where the index $m+1:=1$) then the remaining two edges of $C$ are oriented in the opposite direction.\end{lemma}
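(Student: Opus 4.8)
The plan is to proceed by contradiction, using only acyclicity and the shortcut-free characterization of semi-transitive orientations recalled above. After possibly reversing the direction in which we traverse $C$, I may assume that the $m-2$ edges which agree all point ``forward'', i.e.\ $x_i\to x_{i+1}$, and I call the two remaining edges $e$ and $e'$. If the conclusion fails, then at least one of $e,e'$ points forward as well, so at least $m-1$ of the $m$ edges of $C$ are forward.

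First I would rule out the possibility that all $m$ edges are forward: in that case $x_1\to x_2\to\cdots\to x_m\to x_1$ is a directed cycle, contradicting acyclicity. Hence exactly $m-1$ edges of $C$ are forward and exactly one edge, say $x_jx_{j+1}$, is backward, i.e.\ oriented $x_{j+1}\to x_j$. Deleting this edge leaves a path all of whose $m-1$ remaining edges point forward, so following them starting at $x_{j+1}$ produces a directed path
\[ x_{j+1}\to x_{j+2}\to\cdots\to x_m\to x_1\to\cdots\to x_j \]
that passes through every vertex of $C$, and $x_{j+1}\to x_j$ is an arc joining its two endpoints. Applying the defining property of a semi-transitive orientation to this path, every pair of vertices among $x_1,\dots,x_m$ must be joined by an arc; thus $\{x_1,\dots,x_m\}$ induces a clique in $G$, contradicting the hypothesis. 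The case in which the $m-2$ coinciding edges point backward is handled identically after reversing the traversal of $C$, and this proves the lemma.

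I expect this argument to be entirely routine, with no genuine obstacle. The only point that warrants a moment's care is the observation that removing the single backward edge turns the $m-1$ forward edges into one directed path spanning all of $C$, with the backward edge serving precisely as the shortcutting arc, so that the semi-transitivity hypothesis can be invoked. Specializing to $m=4$ recovers the version proved in \cite{AKM15}.
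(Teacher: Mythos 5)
Your proposal is correct and follows essentially the same two-case argument as the paper: all $m$ edges agreeing yields a directed cycle (contradicting acyclicity), while exactly $m-1$ agreeing makes the lone reversed edge a shortcutting arc over the spanning directed path, which semi-transitivity forbids since the vertices do not induce a clique. The paper states this in one line; you have merely spelled out the shortcut step via the definition, so no substantive difference.
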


\begin{proof} Clearly, if all arcs of $C$ have the same direction then we obtain a cycle; if $m-1$ arcs of $C$ have the same direction, we obtain a shortcut. So, the direction of both remaining arcs must be opposite.
\end{proof}

\begin{figure}
\begin{center}
\begin{tikzpicture}[node distance=1cm,auto,main 
node/.style={circle,draw,inner sep=1pt,minimum size=2pt}]

\node[main node] (1) {{\tiny 1}};
\node[main node] (2) [right of=1,xshift=5cm] {{\tiny 2}};

\node[main node] (3) [below of=1,yshift=0.3cm] {{\tiny  3}};
\node[main node] (4) [below of=3,yshift=0.3cm] {{\tiny 4}};
\node[main node] (5) [below of=4,yshift=0.3cm] {{\tiny 5}};
\node[main node] (6) [below of=5,yshift=0.3cm] {{\tiny 6}};

\node[main node] (7) [below of=2,yshift=0.3cm] {{\tiny 7}};
\node[main node] (8) [below of=7,yshift=0.3cm] {{\tiny  8}};
\node[main node] (9) [below of=8,yshift=0.3cm] {{\tiny 9}};
\node[main node] (10) [below of=9,yshift=0.3cm] {{\tiny 10}};

\node[main node] (11) [below of=6,yshift=0.3cm] {{\tiny 11}};

\node[main node] (13) [below of=11,yshift=0.3cm] {{\tiny 13}};
\node[main node] (14) [below of=13,yshift=0.3cm] {{\tiny 14}};

\node[main node] (12) [below of=10,yshift=0.3cm] {{\tiny 12}};

\node[main node] (15) [below of=12,yshift=0.3cm] {{\tiny 15}};
\node[main node] (16) [below of=15,yshift=0.3cm] {{\tiny 16}};

\path
(1) edge (2)
      edge (3)
      edge  [bend right=60] (4)
      edge [bend right=60] (5)
      edge [bend right=60] (6);
\path
(2) edge (7)
      edge  [bend left=60] (8)
      edge [bend left=60] (9)
      edge [bend left=60] (10);
\path
(3) edge [bend right=60] (11) edge (12) edge (16);
\path
(7) edge [bend left=60] (12) edge (11) edge (14);
\path
(4) edge (8) edge (9) edge (10) edge [bend right=60] (13);
\path
(5) edge (8) edge (9) edge (10) edge (16);
\path
(6) edge (8) edge (9) edge (10) edge (12);
\path
(8) edge [bend left=60] (15);
\path
(9) edge (14);
\path
(10) edge (11);
\path
(13) edge (15) edge (16) edge (11);
\path
(14) edge (15) edge (16);
\path
(12) edge (15);

\end{tikzpicture}
\caption{A minimal non-semi-transitive subgraph of $K(8,3)$}\label{K83-fig}
\end{center}
\end{figure}
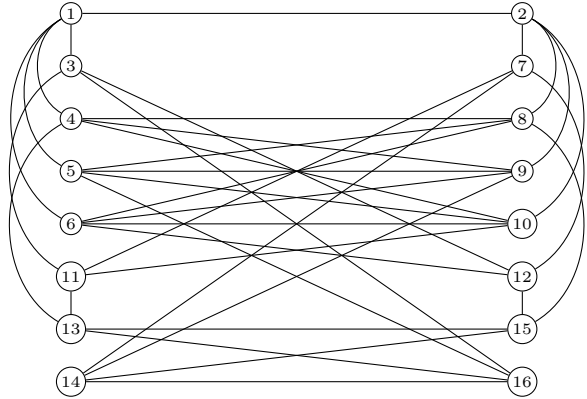

Determining if a triangle-free graph is semi-transitive is an NP-hard problem \cite{HKP16}. The existence of non-semi-transitive triangle-free graphs has been established via Erd\H{o}s' theorem \cite{Erdos} by Halld\'{o}rsson and the authors \cite{HKP11} in 2011 (also see \cite[Section 4.4]{KL15}). However, no explicit examples of such graphs were known until recent work of the first author and Saito \cite{KS19} who have shown {\em computationally} (using the user-friendly freely available software \cite{G}) that a certain subgraph on 16 vertices and 36 edges of the triangle-free Kneser graph $K(8,3)$ is not semi-transitive; the subgraph is shown in Fig.~\ref{K83-fig}. Thus, $K(8,3)$ itself on 56 vertices and 280 edges is non-semi-transitive. The question on the existence of smaller triangle-free non-semi-transitive  graphs has been raised in~\cite{KS19}. 

In Section~\ref{sec2} we prove that the Gr\"otzsch graph in Fig.~\ref{Grotzsch-graph} on 11 vertices 
is a smallest (by the number of vertices) non-semi-transitive triangle-free graph, and that the Chv\'atal graph in  Fig.~\ref{Chvatal-graph} is the smallest triangle-free 4-regular non-semi-transitive graph. 
In Section~\ref{sec3} we address the question on the existence of triangle-free semi-transitive graphs with chromatic number 4, and prove, in particular, that
Toft's graphs and the circulant graph $C(13;1,5)$ (the same as the Toeplitz graph $T_{13}(1,5,8,12)$) are such graphs. 
Finally, in Section~\ref{sec4} we discuss some open problems. 

\section{Non-semi-transitive orientability of the Gr\"otzsch graph and the Chv\'atal graph}\label{sec2}

The leftmost graph in Fig.~\ref{Grotzsch-graph} is the well-known Gr\"otzsch graph (also known as Mycielski graph).  It is well-known \cite{Ch} and is easy to prove that this graph is a minimal 4-chromatic triangle-free graph (and the only such graph on 11 vertices).

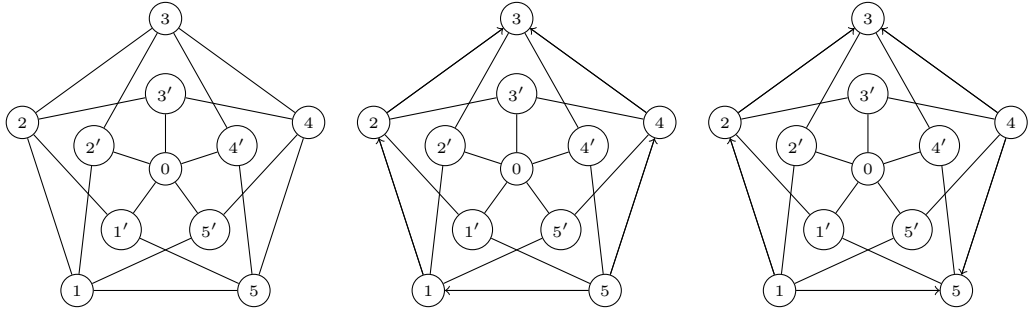
\begin{figure}
\begin{center}
\begin{tiny}

\begin{tabular}{ccc}

\begin{tikzpicture}[every node/.style={draw,circle}]
  \graph [clockwise,math nodes] {     
    subgraph C [V={ {3}, {4}, {5}, {1}, {2} }, name=A, radius=2cm]; 
    subgraph N [V={ {3'}, {4'}, {5'}, {1'}, {2'} }, name=B, radius=1cm];
    subgraph W [V={ {0} }, name=C, radius=0cm];
A 1 -- B 2;
A 1 -- B 5;
A 2 -- B 1;
A 2 -- B 3;
A 3 -- B 2;
A 3 -- B 4;
A 4 -- B 3;
A 4 -- B 5;
A 5 -- B 1;
A 5 -- B 4;

\foreach \i in {1,...,5}{
      C 1 -- B \i;  
    }

  }; 
\end{tikzpicture}

& 

\begin{tikzpicture}[every node/.style={draw,circle}]
  \graph [clockwise,math nodes] {     
    subgraph C [V={ {3}, {4}, {5}, {1}, {2} }, name=A, radius=2cm]; 
    subgraph N [V={ {3'}, {4'}, {5'}, {1'}, {2'} }, name=B, radius=1cm];
    subgraph W [V={ {0} }, name=C, radius=0cm];
A 1 -- B 2;
A 1 -- B 5;
A 2 -- B 1;
A 2 -- B 3;
A 3 -- B 2;
A 3 -- B 4;
A 4 -- B 3;
A 4 -- B 5;
A 5 -- B 1;
A 5 -- B 4;

A 2 -> A 1;
A 3 -> A 2;
A 3 -> A 4;
A 5 -> A 1; 
A 4 -> A 5;

\foreach \i in {1,...,5}{
      C 1 -- B \i;  
    }

  }; 
\end{tikzpicture}

& 

\begin{tikzpicture}[every node/.style={draw,circle}]
  \graph [clockwise,math nodes] {     
    subgraph C [V={ {3}, {4}, {5}, {1}, {2} }, name=A, radius=2cm]; 
    subgraph N [V={ {3'}, {4'}, {5'}, {1'}, {2'} }, name=B, radius=1cm];
    subgraph W [V={ {0} }, name=C, radius=0cm];
A 1 -- B 2;
A 1 -- B 5;
A 2 -- B 1;
A 2 -- B 3;
A 3 -- B 2;
A 3 -- B 4;
A 4 -- B 3;
A 4 -- B 5;
A 5 -- B 1;
A 5 -- B 4;

A 2 -> A 1;

A 2 -> A 3;

A 4 -> A 3;

A 5 -> A 1; 
A 4 -> A 5;

\foreach \i in {1,...,5}{
      C 1 -- B \i;  
    }

  }; 
\end{tikzpicture}

\end{tabular}

\end{tiny}
\end{center}
\vspace{-5mm}
\caption{The Gr\"otzsch graph and two of its partial orientations}\label{Grotzsch-graph}
\end{figure}

\begin{theorem}\label{Knes-K62-thm} The Gr\"otzsch graph $G$ is a smallest (by the number of vertices) non-semi-transitive graph. \end{theorem}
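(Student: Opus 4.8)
The statement rests on two ingredients. The easy one is that every triangle-free graph on at most $10$ vertices is semi-transitive: as recalled above, a triangle-free $4$-chromatic graph has at least $11$ vertices, so any triangle-free graph on at most $10$ vertices is $3$-colourable and hence semi-transitive by Theorem~\ref{3-color-graphs}. Since, moreover, the Gr\"otzsch graph is the \emph{only} triangle-free non-$3$-colourable graph on $11$ vertices, once we show that $G$ admits no semi-transitive orientation it follows that $G$ is the unique (up to isomorphism) smallest triangle-free non-semi-transitive graph. The rest of the argument is the proof that $G$ is not semi-transitive.

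Suppose it is, and fix a semi-transitive orientation. Use the labelling of Fig.~\ref{Grotzsch-graph}: an outer $5$-cycle on $\{1,\dots,5\}$; vertices $1',\dots,5'$ with $i'$ joined to the two outer neighbours of $i$; and a vertex $0$ joined to all of $1',\dots,5'$. Since $G$ is triangle-free, no cycle of length $\ge 4$ in $G$ induces a clique, so Lemma~\ref{lemma} applies to every such cycle; for a $4$-cycle $x_1x_2x_3x_4$ this means that exactly two of its arcs run one way around and two the other, and that a directed path $x_1\to x_2\to x_3$ forces the remaining arcs to be $x_1\to x_4$ and $x_4\to x_3$. Applying Lemma~\ref{lemma} to the outer $5$-cycle: all $5$ arcs in one cyclic direction would be a directed cycle, and $4$ in one cyclic direction is excluded (any $3$ of them would force the other two the other way, contradicting the fourth), so the outer cycle has a $2$–$3$ split of its arcs. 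Up to the dihedral automorphism group of $G$ and, if necessary, reversal of all arcs (which preserves semi-transitivity), the outer orientation is therefore one of the two in the middle and rightmost pictures of Fig.~\ref{Grotzsch-graph}: a single source and a single sink at distance $2$ on the cycle, or two sources and two sinks.

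Now propagate. The graph $G$ has exactly ten $4$-cycles: the five $C_i := (i-1)\,i\,(i+1)\,i'$ (outer edges $\{i-1,i\},\{i,i+1\}$ and inner edges $\{i+1,i'\},\{i',i-1\}$) and the five $D_j := 0\,(j-1)'\,j\,(j+1)'$. For every outer vertex $i$ that is neither a source nor a sink of the outer cycle, the two outer arcs at $i$ form a directed path inside $C_i$, so by Lemma~\ref{lemma} the two inner arcs of $C_i$ run parallel to it (the path $\dots\to i\to\dots$ is accompanied by $\dots\to i'\to\dots$). For every outer vertex $j$ that is a source (resp.\ sink) of the outer cycle, $j$ is a source (resp.\ sink) of $D_j$, so Lemma~\ref{lemma} makes $0$ a source or a sink of $D_j$; and $C_j$ forces $j'$ to be a source or a sink of $C_j$. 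Each arc obtained this way, fed back into the $D_j$'s, creates a new directed $2$-path and forces a further arc at $0$ or at some $i'$. In the source-and-sink case the forced arcs close up to a contradiction: one reaches a shadow $i'$ for which $C_i$ demands that $i'$ be a source or a sink with respect to its two outer neighbours in $C_i$, while the chain of forced arcs (through the $C_i$'s and then the $D_j$'s) gives $i'$ one in-neighbour and one out-neighbour among that pair — equivalently, an edge at $0$ is forced in opposite directions by two different $D_j$'s. The two-sources-two-sinks case is handled the same way, with a little more branching since four of the five shadows now receive only "source-or-sink" constraints rather than outright forced arcs; the only branch points in either case are the directions of the arcs at $0$, and each branch dies after a short chain of applications of Lemma~\ref{lemma} (symmetric branches being identified by an automorphism of $G$ composed with arc reversal).

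The content of the proof is therefore the case analysis, not any computation. The delicate points are (a) verifying that, up to symmetry, only the two displayed outer orientations survive Lemma~\ref{lemma} together with acyclicity, and (b) choosing, in each case, the order in which to apply Lemma~\ref{lemma} to the cycles $C_i$ and $D_j$ so that every relevant arc is pinned down before the contradictory pair of forced orientations can be read off. Once a good order is fixed there is almost nothing to compute, but finding it — and keeping track of the small set of undetermined arcs at the shadows and at $0$ — is where the work lies.
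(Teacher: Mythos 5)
Your setup matches the paper's: minimality via Theorem~\ref{3-color-graphs} (every triangle-free graph on at most $10$ vertices is $3$-colourable), reduction of the outer $5$-cycle, up to symmetry and arc reversal, to the two orientations shown in Fig.~\ref{Grotzsch-graph}, propagation of Lemma~\ref{lemma} through the ten $4$-cycles $C_i$ and $D_j$, and branching on an arc at $0$. The gap is that the actual content of the theorem --- that every such orientation leads to a contradiction --- is asserted rather than proved. Phrases like ``the forced arcs close up to a contradiction'' and ``each branch dies after a short chain of applications of Lemma~\ref{lemma}'', together with your closing admission that choosing the order of applications and tracking the undetermined arcs ``is where the work lies'', defer exactly the verification that constitutes the proof. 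The paper carries it out explicitly: in each of four subcases (Cases 1a, 1b, 2a, 2b) it lists the precise sequence of $4$-cycles to which Lemma~\ref{lemma} is applied and ends by exhibiting a concrete edge both of whose orientations create a shortcut or a cycle (e.g.\ the edge $5'1$ against the $4$-cycles on $\{0,5',1,2'\}$ and $\{1,5',4,5\}$). Nothing in your text certifies that the forcing chains terminate in such a conflict rather than in a consistent orientation, and since deleting any vertex leaves a $3$-colourable, hence semi-transitive, graph, this cannot be waved through on structural grounds.

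Two of the specific claims you lean on are also not justified as stated. The claim that an outer source (or sink) $j$ is automatically a source (or sink) of $D_j$ only follows after the inner arcs at $j$ have been forced via $C_{j-1}$ and $C_{j+1}$; this works in the single-source case, but in the two-sources-two-sinks case those neighbouring outer vertices are themselves extremal, so no such arcs are available before branching (the paper's Case 2 indeed starts with only one forced $4$-cycle, $\{1,2,3,2'\}$). And your description of the final contradiction --- ``an edge at $0$ is forced in opposite directions by two different $D_j$'s'' --- matches only one of the four subcases (the paper's Case 1b, on the edge $01'$); in the other three the contradictory edge is $5'1$ or $34'$, not incident to $0$. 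So what you have is a correct outline of the paper's approach, but the case analysis that makes it a proof is missing.
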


\begin{proof} To obtain a contradiction, suppose that $G$ is oriented semi-transitively. Then, the outer cycle formed by the vertices 1--5 either contains a directed path of length 3, or the longest directed path formed by the vertices is of length 2. Thus, we have two cases to consider. \\[-3mm]

\noindent
{\bf Case 1.} Taking into account symmetries, without loss of generality we can assume that $5 \rightarrow 1 \rightarrow 2 \rightarrow 3$ is a path of length 3, so that the orientation of the remaining two arcs must be $5 \rightarrow 4 \rightarrow 3$ by Lemma~\ref{lemma} as shown in the middle graph in Fig.~\ref{Grotzsch-graph}. Moreover, Lemma~\ref{lemma} can be used to complete orientations of the subgraphs induced by the vertices in the sets $\{1, 2, 3, 2'\}$, $\{1, 2, 1', 5\}$ and $\{3, 4, 5, 4'\}$, as shown in the left graph in Fig.~\ref{Grotzsch-graph-2}. We consider two subcases here depending on orientation of the arc $02'$.\\[-3mm]

\noindent
{\bf Case 1a.} Suppose $0 \rightarrow 2'$ is an arc. By Lemma~\ref{lemma}, 
\begin{itemize}
\item from the subgraph induced by  $0, 2', 3, 4'$, we have $0\rightarrow 4'$;
\item from the subgraph induced by  $0, 1', 5, 4'$, we have $0\rightarrow 1'$;
\item from the subgraph induced by  $0, 1', 2, 3'$, we have $0\rightarrow 3'$ and $3' \rightarrow 2$;
\item from the subgraph induced by  $2, 3, 4, 3'$, we have $3' \rightarrow 4$;
\item from the subgraph induced by  $0, 3', 4, 5'$, we have $0\rightarrow 5'$ and $5' \rightarrow 4$.
\end{itemize}
Now if $5'\rightarrow 1$ were an arc, the subgraph induced by $0, 5', 1, 2'$ would be a shortcut, while if  $1\rightarrow 5'$ were an arc, the subgraph induced by $1, 5', 4, 5$ would be a shortcut; a contradiction. \\[-3mm]

\noindent
{\bf Case 1b.} Suppose $2' \rightarrow 0$ is an arc. By Lemma~\ref{lemma}, 
\begin{itemize}
\item from the subgraph induced by  $0, 5', 1, 2'$, we have $1\rightarrow 5'$ and $5' \rightarrow 0$;
\item from the subgraph induced by  $1, 5, 4, 5'$, we have $4 \rightarrow 5'$;
\item from the subgraph induced by  $0, 3', 4, 5'$, we have $4\rightarrow 3'$ and $3' \rightarrow 0$;
\item from the subgraph induced by  $2, 3, 4, 3'$, we have $2 \rightarrow 3'$.
\end{itemize}
The contradiction is now obtained by the fact that there is no way to orient the arc $0\rightarrow 1'$ in the subgraph formed by $0, 1', 2, 3'$ without creating a cycle or a shortcut. \\[-3mm]

\begin{figure}
\begin{center}
\begin{tiny}

\begin{tabular}{cc}

\begin{tikzpicture}[every node/.style={draw,circle}]
  \graph [clockwise,math nodes] {     
    subgraph C [V={ {3}, {4}, {5}, {1}, {2} }, name=A, radius=2cm]; 
    subgraph N [V={ {3'}, {4'}, {5'}, {1'}, {2'} }, name=B, radius=1cm];
    subgraph W [V={ {0} }, name=C, radius=0cm];

A 3 -> B 4 -> A 5;
A 3 -> B 2 -> A 1;
A 4 -> B 5 -> A 1;

A 1 -- B 2;
A 1 -- B 5;
A 2 -- B 1;
A 2 -- B 3;
A 3 -- B 2;
A 3 -- B 4;
A 4 -- B 3;
A 4 -- B 5;
A 5 -- B 1;
A 5 -- B 4;

A 2 -> A 1;
A 3 -> A 2;
A 3 -> A 4;
A 5 -> A 1; 
A 4 -> A 5;

\foreach \i in {1,...,5}{
      C 1 -- B \i;  
    }

  }; 
\end{tikzpicture}

& 

\begin{tikzpicture}[every node/.style={draw,circle}]
  \graph [clockwise,math nodes] {     
    subgraph C [V={ {3}, {4}, {5}, {1}, {2} }, name=A, radius=2cm]; 
    subgraph N [V={ {3'}, {4'}, {5'}, {1'}, {2'} }, name=B, radius=1cm];
    subgraph W [V={ {0} }, name=C, radius=0cm];

A 4 -> B 5 -> A 1;

A 1 -- B 2;
A 1 -- B 5;
A 2 -- B 1;
A 2 -- B 3;
A 3 -- B 2;
A 3 -- B 4;
A 4 -- B 3;
A 4 -- B 5;
A 5 -- B 1;
A 5 -- B 4;

A 2 -> A 1;

A 2 -> A 3;

A 4 -> A 3;

A 5 -> A 1; 
A 4 -> A 5;

\foreach \i in {1,...,5}{
      C 1 -- B \i;  
    }

  }; 
\end{tikzpicture}

\end{tabular}

\end{tiny}
\end{center}
\vspace{-5mm}
\caption{Two partial orientations of the Gr\"otzsch graph}\label{Grotzsch-graph-2}
\end{figure}

\noindent
{\bf Case 2.} If the longest directed path induced by the vertices 1--5 is of length 2 then, again using the symmetries, we can assume the following orientation of the arcs: $1\rightarrow 2\rightarrow 3$, $1\rightarrow 5$, $4\rightarrow 5$ and $4\rightarrow 3$ as shown in the rightmost graph in Fig.~\ref{Grotzsch-graph}. Moreover, Lemma~\ref{lemma} can be used to complete orientations of the subgraph induced by the vertices in  $\{1, 2, 3, 2'\}$, as shown in the right graph in Fig.~\ref{Grotzsch-graph-2}. We consider two subcases here depending on orientation of the arc $02'$.\\[-3mm]

\noindent
{\bf Case 2a.} Suppose $0 \rightarrow 2'$ is an arc. By Lemma~\ref{lemma}, 

\begin{itemize}
\item from the subgraph induced by  $0, 2', 3, 4'$, we have $0\rightarrow 4'$ and $4' \rightarrow 3$;
\item from the subgraph induced by  $3, 4, 5, 4'$, we have $4'\rightarrow 5$;
\item from the subgraph induced by  $0, 1', 5, 4'$, we have $0\rightarrow 1'$ and $1' \rightarrow 5$;
\item from the subgraph induced by  $1, 2, 1', 5$, we have $1' \rightarrow 2$;
\item from the subgraph induced by  $0, 1', 2, 3'$, we have $0\rightarrow 3'$ and $3' \rightarrow 2$.
\item from the subgraph induced by  $2, 3, 4, 3'$, we have $3' \rightarrow 4$;
\item from the subgraph induced by  $0, 3', 4, 5'$, we have $0\rightarrow 5'$ and $5' \rightarrow 4$.
\end{itemize}
Now if $5'\rightarrow 1$ were an arc, the subgraph induced by $0, 5', 1, 2'$ would be a shortcut, while if  $1\rightarrow 5'$ were an arc, the subgraph induced by $1, 5', 4, 5$ would be a shortcut. A contradiction. \\[-3mm]

\noindent
{\bf Case 2b.} Suppose $2' \rightarrow 0$ is an arc. By Lemma~\ref{lemma},  
\begin{itemize}
\item from the subgraph induced by  $0, 5', 1, 2'$, we have $1\rightarrow 5'$ and $5' \rightarrow 0$;
\item from the subgraph induced by  $1, 5, 4, 5'$, we have $4 \rightarrow 5'$;
\item from the subgraph induced by  $0, 3', 4, 5'$, we have $4\rightarrow 3'$ and $3' \rightarrow 0$;
\item from the subgraph induced by  $2, 3, 4, 3'$, we have $2 \rightarrow 3'$;
\item from the subgraph induced by  $0, 1', 2, 3'$, we have $2\rightarrow 1'$ and $1' \rightarrow 0$;
\item from the subgraph induced by  $1, 2, 1', 5$, we have $5 \rightarrow 1'$;
\item from the subgraph induced by  $0, 1', 5, 4'$, we have $5\rightarrow 4'$ and $4' \rightarrow 0$.
\end{itemize}
Now if $3\rightarrow 4'$ were an arc, the subgraph induced by $2', 3, 4', 0$ would be a shortcut, while if  $4'\rightarrow 3$ were an arc, the subgraph induced by $4, 5, 4', 3$ would be a shortcut; a contradiction. \\[-3mm]

Thus, $G$ is not semi-transitive, and its minimality follows from the above mentioned fact that all triangle-free graphs on 10 or fewer vertices are 3-colorable, and thus semi-transitive by Theorem~\ref{3-color-graphs}.
\end{proof}

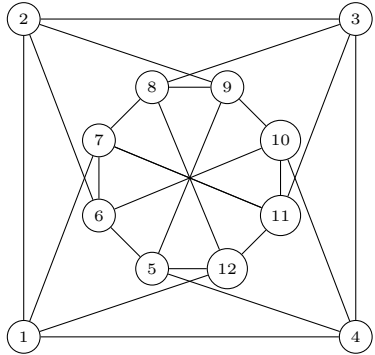
\begin{figure}
\begin{center}

\begin{tikzpicture}[node distance=1cm,auto,main node/.style={circle,draw,inner sep=2.5pt,minimum size=4pt}]

\node[main node] (9) {{\tiny 9}};
\node[main node] (8) [left of=9] {{\tiny 8}};
\node[main node] (10) [below right of=9] {{\tiny 10}};
\node[main node] (7) [below left of=8] {{\tiny 7}};
\node[main node] (6) [below of=7] {{\tiny 6}};
\node[main node] (5) [below right of=6] {{\tiny 5}};
\node[main node] (11) [below of=10] {{\tiny 11}};
\node[main node] (12) [below left of=11] {{\tiny 12}};
\node[main node] (2) [above left of=8,xshift=-1cm,yshift = 0.2cm] {{\tiny 2}};
\node[main node] (3) [above right of=9,xshift=1cm,yshift = 0.2cm] {{\tiny 3}};
\node[main node] (1) [below left of=5,xshift=-1cm,yshift = -0.2cm] {{\tiny 1}};
\node[main node] (4) [below right of=12,xshift=1cm,yshift = -0.2cm] {{\tiny 4}};


\path
(5) edge (6)
     edge (12)
     edge (9);
\path
(7) edge (6)
     edge (8)
     edge (11);
\path
(9) edge (8)
     edge (10);
\path
(11) edge (10)
     edge (12)
     edge (7);
\path
(8) edge (12);
\path
(6) edge (10);

\path
(1) edge (2)
     edge (4)
     edge (12)
     edge (7);

\path
(3) edge (2)
     edge (4)
     edge (11)
     edge (8);
\path
(2) edge (6)
     edge (9);

\path
(4) edge (5)
     edge (10);

\end{tikzpicture}

\caption{The Chv\'atal graph}\label{Chvatal-graph}
\end{center}
\end{figure}

The well-known Chv\'atal graph is presented in Fig.~\ref{Chvatal-graph}. It is the minimal 4-regular triangle-free 4-chromatic graph \cite{Ch}. Using the software \cite{G}, we found out that the Chv\'atal graph is not semi-transitive. We have also found an analytical proof of this fact via a long and tedious case analysis. Even being written using a specially developed short notation introduced in \cite{AKM15}, the proof takes several pages; therefore, we put the proof of our next theorem in Appendix for the most patient and interested Reader.

\begin{theorem}\label{Chv-thm} The Chv\'atal graph $H$ is a minimal $4$-regular triangle-free non-semi-transitive graph. \end{theorem}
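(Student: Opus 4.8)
The theorem has two parts: the minimality claim, which is immediate, and the non-semi-transitivity of $H$, which carries all the weight.

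For minimality, recall from Chv\'atal~\cite{Ch} that $H$ is the smallest $4$-regular triangle-free graph of chromatic number $4$. Hence every $4$-regular triangle-free graph on at most $11$ vertices is $3$-colourable, and so semi-transitive by Theorem~\ref{3-color-graphs}. Consequently, once $H$ is shown to be non-semi-transitive it is automatically a smallest such graph. Note that heredity is of no help here: by Theorem~\ref{Knes-K62-thm} the only triangle-free non-semi-transitive graph on fewer than $12$ vertices is the Gr\"otzsch graph, and since every induced subgraph of $H$ has maximum degree at most $4$ while the Gr\"otzsch graph has a vertex of degree $5$, $H$ does not contain it. So a direct argument is unavoidable.

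The engine of that argument is Lemma~\ref{lemma} with $m=4$. Because $H$ is triangle-free, the four vertices of any $4$-cycle do not form a clique, so in a semi-transitive orientation of $H$ no $4$-cycle can have three of its edges pointing the same way around it; equivalently, every $4$-cycle is oriented as two internally disjoint directed $2$-paths with common endpoints, one local source and one local sink. As $H$ has girth $4$ and several $4$-cycles through each edge, fixing the orientation of a few arcs forces the orientation of many more. The plan is therefore: (i) fix the orientation of one $4$-cycle of $H$, which by the above and the symmetry of the cycle together with global reversal we may take to be $x_1\to x_2\to x_3$, $x_1\to x_4\to x_3$, choosing (via an automorphism of $H$) a convenient such cycle; (ii) repeatedly apply Lemma~\ref{lemma} to the $4$-cycles meeting the already-oriented region, recording the growing partial orientation in the compact notation of~\cite{AKM15}; (iii) whenever propagation stalls, branch on the orientation of a single still-undetermined edge, building a case tree; (iv) in every leaf exhibit an explicit obstruction --- a directed cycle, or a directed path $v_0\to v_1\to\cdots\to v_k$ with the arc $v_0\to v_k$ present and a non-adjacent pair $v_i,v_j$ along it --- exactly as in the proof of Theorem~\ref{Knes-K62-thm}. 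Checking the resulting orientations with the software~\cite{G} is a prudent independent guard against an omitted case.

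The hard part will be the breadth of the case analysis rather than any single idea. Unlike the Gr\"otzsch graph, whose degree-$5$ apex pins down five arcs at once and leaves only the two short cases handled above, the Chv\'atal graph has no comparable anchor: $4$-regularity together with its small but awkward automorphism group means the first wave of forced arcs still leaves many edges free, so the case tree is wide, and the obstruction in different leaves takes different forms --- sometimes a $4$-cycle, sometimes a longer cycle, sometimes a shortcut on a path of length $3$ or $4$. Carrying out this bookkeeping correctly and exhaustively, which is why we relegate it to the Appendix, is the real work; the short notation of~\cite{AKM15} keeps it manageable but does not shorten it conceptually.
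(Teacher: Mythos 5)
Your minimality argument is fine and matches what the paper leaves implicit: by Chv\'atal's result every $4$-regular triangle-free graph on fewer than $12$ vertices is $3$-chromatic, hence semi-transitive by Theorem~\ref{3-color-graphs}. The problem is with the substantive half of the theorem. What you give for non-semi-transitivity is a plan, not a proof: steps (i)--(iv) describe the propagate-with-Lemma~\ref{lemma}, branch, and find-a-shortcut scheme, but you never execute it, and you explicitly defer the entire case tree to an appendix you do not supply (with a software check as a ``guard''). In this theorem the case analysis \emph{is} the proof --- the paper's argument consists of thirteen partially oriented copies of $H$, each driven to an explicit directed cycle or shortcut --- so as it stands nothing has been established beyond a restatement of the method.

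Moreover, the plan itself has a concrete flaw in step (i). Lemma~\ref{lemma} with $m=4$ forces exactly two edges of a $4$-cycle in each rotational direction, but this does \emph{not} mean every $4$-cycle is ``two internally disjoint directed $2$-paths with one local source and one local sink'': the two clockwise edges may also be the opposite pair, giving the alternating orientation with two sources and two sinks (e.g.\ $2\rightarrow 1$, $2\rightarrow 3$, $4\rightarrow 1$, $4\rightarrow 3$ on the cycle $1234$), which is acyclic and shortcut-free on the cycle. Normalizing the starting $4$-cycle to $x_1\rightarrow x_2\rightarrow x_3$, $x_1\rightarrow x_4\rightarrow x_3$ therefore discards half of the admissible initial configurations; in the paper's proof these are exactly the copies $D$, $E$, $F$ (the paper further branches on the number of arcs out of vertex $4$ among $45$ and $4(10)$, yielding six initial copies $A$--$F$). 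So even if you carried out your propagation faithfully, the case analysis would not be exhaustive and the conclusion would not follow.
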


As it was shown in \cite{Ch}, the Chv\'atal graph $H$ is not 4-critical: it remains 4-chromatic after removal of the edge $56$  (a graph is called 4-{\em critical}, if it is 4-chromatic, but removal of any edge makes it 3-chromatic). The software \cite{G} shows that the graph $H\setminus  \{56\}$ is still non-semi-transitive.  A proof of this fact is very similar to the proof of Theorem~\ref{Chv-thm}, in particular, it is also tedious, long and does not bring any new insights, so we omit it.

Note also, that proving that a graph $G$ is not semi-transitive immediately implies that the whole class of graphs containing $G$ as an induced subgraph is not semi-transitive; so, Theorems~\ref{Knes-K62-thm} and~\ref{Chvatal-graph} indeed give two classes of non-semi-transitive graphs.

\section{Semi-transitive triangle-free 4-chromatic graphs}\label{sec3}

As a matter of fact, no explicit examples of semi-transitively orientable triangle-free graphs with chromatic number 4, or larger, have been published yet.  However, as it was shown in \cite{HKP11}, the existence of such graphs easily follows from two well-known classical results presented below.

\begin{theorem}[\cite{Vitaver}]\label{Vit}  A graph is $k$-chromatic if and only if the minimum possible length of the longest directed path among all its acyclic orientations is $k-1$. 
\end{theorem}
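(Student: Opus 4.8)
The statement is the classical Gallai--Roy--Vitaver theorem, and the plan is to establish the equivalence by proving the single numerical identity $\chi(G)-1=\min_D\ell(D)$, where the minimum ranges over all acyclic orientations $D$ of $G$ and $\ell(D)$ denotes the length (number of arcs) of a longest directed path in $D$. Once this identity is in hand, $G$ being $k$-chromatic is the same as $\min_D\ell(D)=k-1$, which is exactly the claim. I would prove the two inequalities $\min_D\ell(D)\le \chi(G)-1$ and $\min_D\ell(D)\ge \chi(G)-1$ separately.

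For the upper bound, I would start from an optimal proper colouring $c:V(G)\to\{1,\dots,\chi(G)\}$ and orient every edge $uv$ from the endpoint of smaller colour to the endpoint of larger colour. Since colours strictly increase along every directed walk, this orientation is acyclic, and the vertices of any directed path receive pairwise distinct, strictly increasing colours; hence such a path has at most $\chi(G)$ vertices and thus length at most $\chi(G)-1$. This exhibits an acyclic orientation $D$ with $\ell(D)\le \chi(G)-1$, so $\min_D\ell(D)\le \chi(G)-1$.

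For the lower bound, I would take an arbitrary acyclic orientation $D$ and define $c(v)$ to be the length of a longest directed path of $D$ terminating at $v$; acyclicity guarantees this quantity is finite and lies in $\{0,1,\dots,\ell(D)\}$. If $u\to v$ is an arc of $D$, then prepending this arc to a longest directed path ending at $u$ yields a directed path ending at $v$, so $c(v)\ge c(u)+1$; in particular $c(u)\ne c(v)$, so $c$ is a proper colouring of $G$ using at most $\ell(D)+1$ colours. Therefore $\chi(G)\le \ell(D)+1$ for every acyclic orientation $D$, i.e. $\chi(G)-1\le \min_D\ell(D)$. Combining the two inequalities yields $\chi(G)-1=\min_D\ell(D)$.

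There is no serious obstacle here: the only point requiring care is the well-definedness of the vertex labels $c(v)$ in the lower-bound argument, which is precisely where acyclicity is used in an essential way (a directed cycle would render ``longest path ending at $v$'' meaningless), together with the routine verification that each of the two constructed objects --- the orientation obtained from a colouring and the colouring obtained from an orientation --- has the claimed properties.
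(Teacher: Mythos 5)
Your proof is correct: it is the standard argument for the Gallai--Roy--Vitaver theorem, and the paper itself offers no proof of this statement --- it is quoted as a classical result with a citation to Vitaver, so there is no in-paper argument to compare against. Both halves of your argument are sound: orienting each edge from the smaller to the larger colour gives an acyclic orientation whose directed paths have strictly increasing colours, hence length at most $\chi(G)-1$; and labelling each vertex by the length of a longest directed path ending at it gives a proper colouring with at most $\ell(D)+1$ colours for any acyclic orientation $D$. One small point worth making explicit: when you extend a longest directed path ending at $u$ by the arc $u\rightarrow v$ (this is appending, not prepending), you need acyclicity once more to guarantee that $v$ does not already lie on that path --- otherwise the extension would not be a path; this is in addition to the use of acyclicity for the well-definedness of the labels that you already note. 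With that remark the argument is complete.
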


\begin{theorem}[\cite{Erdos}]\label{Erd} For every $k\ge 2$ and $g\ge 3$ there exists a $k$-chromatic graph of girth $g$. 
\end{theorem}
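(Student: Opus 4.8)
The plan is to prove Theorem~\ref{Erd} by the probabilistic method, following Erd\H{o}s. The guiding idea is that a suitably sparse random graph simultaneously has (i)~very few cycles of length at most $g$ and (ii)~no large independent set; deleting one vertex from each short cycle then destroys all small girth, while by~(ii) the chromatic number of what remains is still forced to be large. Concretely, fix $g$ and let $G=G(n,p)$ be the binomial random graph on vertex set $[n]$ with edge probability $p=n^{\theta-1}$, where $\theta$ is a fixed constant with $0<\theta<1/g$ (for definiteness $\theta=1/(2g)$). All estimates below are as $n\to\infty$.

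First I would bound the number $X$ of cycles of length at most $g$ in $G$. Since $K_n$ has $\binom{n}{i}(i-1)!/2$ cycles of length $i$, each present in $G$ with probability $p^i$, we get $\mathbb{E}[X]=\sum_{i=3}^{g}\binom{n}{i}\tfrac{(i-1)!}{2}p^{i}\le\sum_{i=3}^{g}\tfrac{(np)^{i}}{2i}=O\!\left(n^{g\theta}\right)=o(n)$, using $g\theta<1$. By Markov's inequality, $\Pr[X\ge n/2]\to 0$. Next I would bound the independence number: with $m=\lceil(3/p)\ln n\rceil=O(n^{1-\theta}\ln n)$, a union bound gives $\Pr[\alpha(G)\ge m]\le\binom{n}{m}(1-p)^{\binom{m}{2}}\le n^{m}e^{-p\binom{m}{2}}\to 0$, since after taking logarithms the exponent is $\le m\ln n-p\binom{m}{2}\to-\infty$ for this choice of $m$.

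Hence, for all large $n$ there is a \emph{deterministic} graph $G_0$ on $n$ vertices with fewer than $n/2$ cycles of length $\le g$ and with $\alpha(G_0)<m$. Deleting one vertex from each such cycle yields an induced subgraph $H$ with at least $n/2$ vertices, girth strictly greater than $g$, and $\alpha(H)\le\alpha(G_0)<m$; therefore $\chi(H)\ge |V(H)|/\alpha(H)>(n/2)/m=\Omega(n^{\theta}/\ln n)\to\infty$. Given $k$, choose $n$ large enough that $\chi(H)\ge k$. To make the chromatic number \emph{exactly} $k$, pass to a minimal (by number of vertices) induced subgraph $H'\subseteq H$ with $\chi(H')\ge k$; deleting any vertex $v$ lowers the chromatic number below $k$ by minimality, while $\chi(H')\le\chi(H'-v)+1$, so $\chi(H')=k$, and $H'$ still has girth $>g$. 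Finally, to pin the girth down to exactly $g$, take the disjoint union of $H'$ with one cycle $C_g$: this lowers the girth to $g$ and leaves the chromatic number equal to $k$ whenever $\chi(C_g)\le k$, i.e.\ for $k\ge 3$, and for $k=2$ when $g$ is even; the remaining case ($k=2$, $g$ odd) is vacuous under the literal reading and is covered by reading ``girth $g$'' as ``girth $\ge g$'' and taking $C_{2g}$.

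The main obstacle is the joint calibration of $p$. We need $p$ large enough that $\alpha(G)=O(n^{1-\theta}\ln n)$ — this is what ultimately forces a large chromatic number and survives the vertex deletions — yet small enough that the expected number of cycles of length $\le g$ is $o(n)$, so that cleaning the graph in the previous step removes only a negligible fraction of its vertices. The requirement $g\theta<1$ is precisely the compromise that makes both tail estimates go through at once; verifying them with this single choice of exponent is the technical heart of the argument, and everything else is bookkeeping.
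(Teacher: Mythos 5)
Your probabilistic deletion argument is correct: the cycle-count and independence-number estimates for $G(n,p)$ with $p=n^{\theta-1}$, $g\theta<1$, the Markov/union bounds, the deletion step, the passage to a vertex-minimal subgraph to get chromatic number exactly $k$, and the disjoint union with $C_g$ to pin the girth are all sound, and this is precisely the classical Erd\H{o}s argument that the paper invokes by citing \cite{Erdos} without giving a proof of its own. Your observation that the case $k=2$ with $g$ odd only makes sense under the reading ``girth at least $g$'' is also a fair and accurate caveat about the literal statement.
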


Indeed, Theorem~\ref{Vit} implies that every graph whose girth is larger than its chromatic number has a semi-transitive orientation (as there is no chance for a shortcut in an acyclic orientation of such a graph), and Theorem~\ref{Erd} claims that such graphs exist. 
However, the existence of 4-chromatic semi-transitive graphs of girth $4$ does not follow from Theorems~\ref{Vit} and~\ref{Erd}. Below we present two explicit examples of such graphs.

\subsection{Circulant graphs}

 A {\em circulant graph} $C(n; a_1,\ldots, a_k)$ is a graph with the vertex set $\{0,\ldots , n-1\}$ and an edge set 
 $$E=\{ij \ |\ (i-j)\pmod n \mbox{ or } (j-i)\pmod n \mbox{ are in } \{a_1,\ldots, a_k\}\}.$$
 According to \cite{BT}, such graphs were first studied in 1932 by Foster, and the name comes from circulant matrices introduced by Catalan in 1846.
Circulant graphs have applications in distributed computer networks \cite{BCH}. 
Note that circulant graphs are indeed the Cayley graphs on cyclic groups $Z_n$; so, they are 
vertex-transitive (i.e.\ for every pair of its vertices there is an automorphism mapping one of them into another).  Circulant graphs are also a particular case of Toeplitz graphs \cite{Ghorban}. Various results on semi-transitivity of Toeplitz graphs have been obtained in \cite{CKKK}.

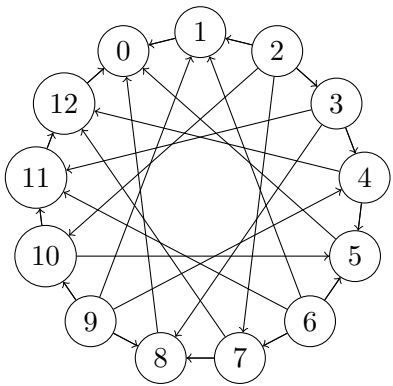
\begin{figure}
\begin{center}
\begin{tikzpicture}[every node/.style={draw,circle}]
  \graph [clockwise,math nodes] {     
    subgraph C [V={ {1}, {2}, {3}, {4}, {5}, {6}, {7}, {8}, {9}, {10}, {11}, {12}, {0} }, name=A, radius=2.2cm]; 

A 1 -> A 13;
A 2 -> A 1; A 2 -> A 3; A 2 -> A 7; A 2 -> A 10;
A 3 -> A 4; A 3 -> A 8; A 3 -> A 11; 
A 4 -> A 5; A 4 -> A 12;
A 5 -> A 13;
A 6 -> A 1; A 6 -> A 5; A 6 -> A 7; A 6 -> A 11;
A 7 -> A 8; A 7 -> A 12;
A 8 -> A 13;
A 9 -> A 1; A 9 -> A 4; A 9 -> A 8; A 9 -> A 10;
A 10 -> A 5; A 10 -> A 11;
A 11 -> A 12;
A 12 -> A 13;

  }; 
\end{tikzpicture}
\caption{A semi-transitive orientation of the circulant graph $C(13;1,5)$}\label{Toeplitz-13-1-5-8-12}
\end{center}
\end{figure}
It is well-known that the circulant graph $C(13;1,5)$ (which is the same as the Toeplitz graph $T_{13}(1,5,8,12)$) is the smallest vertex-transitive 4-chromatic triangle-free graph \cite{JT}. Of course, it would be nice to add this graph to our collection of minimal non-semi-transitive 4-chromatic triangle-free graphs in the previous section, but the graph appears to be semi-transitive, as follows from the next theorem.

\begin{theorem}\label{Circ} The circulant graph $C(13;1,5)$ is a $4$-chromatic $4$-regular semi-transitive graph of girth $4$. 
\end{theorem}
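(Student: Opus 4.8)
The plan is to take the orientation $D$ depicted in Figure~\ref{Toeplitz-13-1-5-8-12} and verify directly that it witnesses all four claimed properties, with essentially all of the work going into semi-transitivity. The properties ``$4$-regular'' and ``girth $4$'' are immediate: every vertex $i\in\mathbb{Z}_{13}$ is adjacent precisely to $i\pm1$ and $i\pm5$, no three of $\pm1,\pm5$ sum to $0$ modulo $13$ (so there is no triangle), and the vertices $0,1,6,5$ induce a $4$-cycle; that the chromatic number is $4$ is the classical fact quoted just before the theorem. Thus the task reduces to showing that $D$ is a semi-transitive orientation.

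First I would establish acyclicity by exhibiting the topological order $2,6,9,3,7,10,4,8,11,5,12,1,0$ and checking that each arc of $D$ goes from an earlier to a later vertex. The same order yields, via the standard longest-path recursion along a topological sort, that the longest directed path of $D$ has length $4$ and that every such path terminates at the unique sink $0$; in fact there are only five directed paths of length $4$, each of the form $2\to\cdots\to0$ or $9\to\cdots\to0$. This length bound is the engine of the argument.

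Next I would reduce shortcut-freeness to a finite check, using triangle-freeness. If $v_0\to v_1\to\cdots\to v_k$ is a directed path with $k\ge3$ and $v_0v_k$ an edge, then $v_0$ and $v_2$ cannot be adjacent (that would be a triangle with $v_1$), so this configuration is \emph{automatically} a shortcut; conversely every shortcut contains such a configuration. Hence $D$ is semi-transitive if and only if no two adjacent vertices are joined by a directed path of length at least $3$, and by the length bound only lengths $3$ and $4$ can occur. For length $4$: the five directed $4$-paths all join $2$ or $9$ to $0$, and neither $2$ nor $9$ is adjacent to $0$, so none of them is a shortcut. For length $3$: a directed $3$-path between adjacent endpoints, together with the joining edge, induces a $4$-cycle of $C(13;1,5)$, and the $4$-cycles are exactly the thirteen cycles on $\{c,c+1,c+5,c+6\}$ for $c\in\mathbb{Z}_{13}$ (the only cyclic arrangement of four elements of $\pm1,\pm5$ summing to $0$ is $1,5,-1,-5$); by Lemma~\ref{lemma}, such a $4$-cycle contains a directed $3$-path between cycle-adjacent endpoints only if three of its edges are oriented consistently around the cycle, and a direct inspection of the thirteen cases shows that in $D$ each of these $4$-cycles has exactly two of its edges oriented consistently around it. Therefore $D$ has no shortcut, so it is semi-transitive and the theorem follows.

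The main difficulty is conceptual rather than computational: recognising that ``shortcut-free'' can be replaced by the bounded condition ``no directed path of length $3$ or $4$ between adjacent vertices''. This rests on the triangle-free reduction together with the computation that $D$ has no long directed paths; after that, the remaining verification (thirteen $4$-cycles and five $4$-paths) is short and mechanical. One point to watch is that the reduction genuinely uses girth at least $4$, so the triangle-freeness of $C(13;1,5)$ is essential and not merely decorative.
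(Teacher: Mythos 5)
Your proof is correct, but it verifies shortcut-freeness by a genuinely different method than the paper, so a comparison is in order. I checked your computational claims against the orientation of Fig.~\ref{Toeplitz-13-1-5-8-12}: the order $2,6,9,3,7,10,4,8,11,5,12,1,0$ is indeed a topological order; the longest directed path has length $4$, the only length-$4$ paths being $2\,3\,4\,5\,0$, $2\,3\,4\,12\,0$, $2\,3\,11\,12\,0$, $2\,10\,11\,12\,0$ and $9\,10\,11\,12\,0$, whose endpoints are non-adjacent; and each of the thirteen $4$-cycles on $\{c,c+1,c+5,c+6\}$ carries exactly two arcs in each rotational direction. Your key move is to use triangle-freeness to replace ``shortcut-free'' by the bounded condition ``no directed path of length at least $3$ between adjacent vertices'', then kill length $3$ via the classification of all $4$-cycles of the circulant and length $4$ via the longest-path bound. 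The paper proves shortcut-freeness of the same orientation quite differently: it formulates two local claims (a source or sink all of whose neighbours become sinks, respectively sources, after its deletion lies in no shortcut; a source lying in a shortcut must start a sufficiently long outgoing path) and uses them to delete vertices one by one until only a tree on $\{3,4,11\}$ remains. Your reduction is the more structural and reusable one: for any triangle-free graph an acyclic orientation is semi-transitive iff no two adjacent vertices are joined by a directed path of length at least $3$, and the ``balanced $4$-cycle'' criterion handles the length-$3$ case uniformly thanks to the circulant symmetry; the paper's pruning claims, by contrast, give a quick generic hand-check of one concrete orientation without any cycle enumeration. One cosmetic point: the fact that a directed $3$-path along a $4$-cycle forces three consistently oriented cycle edges is immediate from the definition of a shortcut (with acyclicity excluding four), so your appeal to Lemma~\ref{lemma} at that step is unnecessary, though harmless.
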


\begin{proof} Let $G:=C(13;1,5)$ and consider its orientation presented in Fig.~\ref{Toeplitz-13-1-5-8-12}. It is easy to verify 
by successive deletion of sources and/or sinks that this orientation is acyclic.
The following two easy observations help in checking the absence of shortcuts.\\[-3mm] 

\noindent {\bf Claim $1$.} If $v$ is a source or a sink in a directed graph and either all its neighbors are sinks in $G\setminus v$ or all of them are sources in $G\setminus v$ then $v$ does not lie in any 
shortcut.

Indeed, assume $v$ lies in a shortcut with a long path $v_0\rightarrow v_1\rightarrow \cdots \rightarrow v_{k-1}\rightarrow v_k$. If $v$ is a sink then $v=v_k$, and thus, $v_{k-1}$ cannot be a source in $G\setminus v$ and $v_0$ cannot be a sink in $G\setminus v$. If $v$ is a source then $v=v_0$, and thus, $v_k$ cannot be a source in $G\setminus v$ and $v_1$ cannot be a sink in $G\setminus v$. \\[-3mm] 

\noindent {\bf Claim $2$.} If $v$ is a source that lies in a shortcut, then there are two directed paths $P_0,P_1$ starting at $v$ so that $P_0$ starts with a shortcutting arc $u\rightarrow v$ and $v$ is $k$-th vertex in $P_1$ for some $k\ge 4$. 

This claim follows directly from the definition of the shortcut.

By Claim 1, $0$ is not a part of any shortcut in $G$, and $1$ does not lie in a shortcut in $G\setminus 0$. In the graph $G\setminus \{0,1\}$ the paths starting in $6$ are $\{65, 678, 67(12),6(11)(12)\}$, and the paths starting in $9$ are $$\{9(10)5, 9(10)(11)(12),945, 94(12),98 \}.$$ By Claim~2, both these vertices are not in shortcuts. Applying Claim~1 to $G\setminus \{ 0,1,6,9\}$, remove successively the vertices $2$, $7$, and $8$. In the obtained graph, exclude $10$ 
by Claim~2 (the only paths are $(10)5$ and $(10)(11)(12)$), and afterwards, remove $5$ and $12$ by Claim~1. The remaining graph on the vertex set $\{3,4,11\}$ is a tree. 

So, there are no shortcuts in $G$ and the considered orientation is semi-transitive.
\end{proof} 

As it was proved in \cite{Heu}, a connected $4$-regular circulant other than $C(13;1,5)$  has chromatic number $4$ if and only if it is isomorphic to the circulant graph $C(n;1,2)$ for some $n=3t+1$ or $n=3t+2$ where $t\ge 2$. Although all such circulants contain triangles, we would like to close the question on the semi-transitivity of 4-regular circulants by proving the following result.

\begin{theorem}\label{Circ2} Each $4$-regular circulant graph is semi-transitive. 
\end{theorem}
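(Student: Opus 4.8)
\textbf{Proof plan for Theorem~\ref{Circ2}.}
The plan is to reduce the statement to a short list of cases according to the structure of the connection set $\{a_1,a_2\}$ of a $4$-regular circulant $C(n;a_1,a_2)$, and then in each case exhibit an explicit acyclic shortcut-free orientation, reusing the sink/source-elimination technique of Theorem~\ref{Circ}. First I would normalize: up to graph isomorphism we may assume $0<a_1<a_2\le n/2$, and (since multiplying the connection set by a unit modulo $n$ gives an isomorphic circulant) we may further reduce to a convenient representative. The degenerate situations — when $a_2=n/2$, or when $a_1$ and $a_2$ are not coprime to $n$, so the graph is disconnected and a union of smaller circulants — should be disposed of first: a disjoint union is semi-transitive iff each component is, and each component is again a circulant of degree at most $4$, so induction on $n$ handles it (degree-$\le 3$ circulants, being $3$-colourable, are semi-transitive by Theorem~\ref{3-color-graphs}).

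The core case is $C(n;1,a)$ with $\gcd(a,n)=1$ and $2\le a\le n/2$ (after scaling we may assume $a_1=1$). Here I would distinguish the triangle-free-like regime from the regime where the graph contains triangles (which happens exactly when $a\equiv\pm1\pmod n$ relative to the other generator, i.e.\ for small $n$ and for $C(n;1,2)$). For $C(n;1,2)$ and its close relatives the chromatic number is $3$ or $4$; when it is $3$ we are done by Theorem~\ref{3-color-graphs}, and the finitely many $4$-chromatic exceptions identified by \cite{Heu} — the graphs $C(n;1,2)$ for $n\equiv1,2\pmod 3$ — I would orient directly: orient every edge from the smaller to the larger residue in $\{0,1,\dots,n-1\}$ except for the few ``wrap-around'' edges near $n-1$, choosing their direction so that no directed cycle and no shortcut is created; acyclicity is immediate from the near-topological ordering, and shortcut-freeness follows because a shortcutting arc in such an orientation would force a long path among at most four close residues, which one checks by hand. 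For the general $C(n;1,a)$ I would use the ``staircase'' orientation: order the vertices $0,1,\dots,n-1$ and orient $i\to i+1$ always, and orient the $a$-edges $i\to i+a$ (indices mod $n$) for $i$ in the non-wrap-around range, reversing a bounded number of edges at the seam $[n-a,n-1]$. Then Claim~1 and Claim~2 of Theorem~\ref{Circ} apply: the seam vertices form a bounded ``hard core'', while the vast bulk of the vertices are successively stripped off as sources/sinks whose neighbourhoods are monotone, so no shortcut can meet them; the remaining bounded subgraph is checked to be a forest or directly verified shortcut-free.

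The main obstacle I anticipate is making the seam analysis uniform in $n$: the orientation near the indices $n-a,\dots,n-1,0,\dots,a-1$ genuinely depends on $a$ and on $n\bmod a$, and a careless choice there can create a short directed cycle of length $3$ or $4$ (through a $1$-edge and an $a$-edge) or a shortcut. I would handle this by working with $n$ large relative to $a$ first (so the seam is a fixed finite gadget whose semi-transitivity is a finite check, independent of $n$), and then treating the finitely many small cases $n\le N(a)$ — and, since $a\le n/2$ forces $a$ small whenever $n$ is small, this is genuinely a finite residual computation — separately, invoking Theorem~\ref{3-color-graphs} wherever the chromatic number drops to $3$. Pulling the scaling reduction through at the start is what keeps the number of ``shapes'' of connection set finite, so the whole argument collapses to: (i) disconnected case by induction; (ii) $3$-chromatic case by Theorem~\ref{3-color-graphs}; (iii) the $4$-chromatic family $C(n;1,2)$ by an explicit orientation; (iv) all remaining $C(n;1,a)$ by the staircase orientation plus the source/sink-elimination lemma.
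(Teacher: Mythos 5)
There is a genuine gap, and it sits exactly where your argument does the most work: case (iv). The ``staircase'' orientation of $C(n;1,a)$ is not semi-transitive for any $a\ge 3$, even in the bulk far away from the seam: the directed path $i\to i+1\to\cdots\to i+a$ together with the arc $i\to i+a$ is a shortcut, because intermediate vertices at distance $2$ (e.g.\ $i+1$ and $i+3$) are non-adjacent whenever $a\neq 2$. So the problem is not a boundedly-sized seam gadget that can be patched and checked once; the monotone orientation is globally bad. Moreover, the source/sink-elimination Claims from the proof of Theorem~\ref{Circ} do not strip the bulk as you assert: an interior vertex $i$ has in-neighbours $i-1,i-a$ and out-neighbours $i+1,i+a$, so it is neither a source nor a sink, and its neighbourhood is not of the required all-sink/all-source form. (Note also that the orientation used in Theorem~\ref{Circ} for $C(13;1,5)$ is not a staircase orientation.) A secondary gap is the normalization: connectedness of $C(n;a_1,a_2)$ only means $\gcd(a_1,a_2,n)=1$, not that some generator is a unit modulo $n$, so you cannot in general scale to $a_1=1$ (e.g.\ $C(15;3,5)$ is connected but isomorphic to no $C(15;1,a)$); and, relatedly, the $4$-chromatic graphs $C(n;1,2)$ with $n\equiv 1,2\pmod 3$ are an infinite family, not ``finitely many exceptions''.

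The repair is to use the result of \cite{Heu} in full strength, which is precisely what the paper does: every connected $4$-regular circulant other than $C(13;1,5)$, $K_5=C(5;1,2)$ and the graphs $C(n;1,2)$ is $3$-chromatic, hence semi-transitive by Theorem~\ref{3-color-graphs}; $C(13;1,5)$ is Theorem~\ref{Circ}; $K_5$ has a transitive orientation; and the only family needing an explicit orientation is $C(n;1,2)$, $n\ge 6$ (Lemma~\ref{Circs}), where your near-topological orientation with a small set of reversed wrap-around edges is indeed the right idea (the paper fixes seven seam arcs explicitly and verifies shortcut-freeness by inspecting the few long paths through the seam). In other words, your case (iv) should not exist at all; once the chromatic classification is invoked, there is no ``general $C(n;1,a)$'' left to orient, which is fortunate because no uniform staircase-type orientation works for it.
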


Clearly, a disjoint union of semi-transitive graphs is semi-transitive, $K_5=C(5;1,2)$ admits transitive orientation and every 3-colorable graph is semi-transitive by Theorem~\ref{3-color-graphs}. Hence, Theorem~\ref{Circ2} is a direct corollary of the above mentioned result in \cite{Heu},
Theorem~\ref{Circ} and the following lemma.

\begin{lemma}\label{Circs} A circulant graph $C(n;1,2)$ is semi-transitive for each $n\ge 6$. 
\end{lemma}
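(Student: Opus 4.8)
The plan is to exhibit an explicit semi-transitive orientation of $C(n;1,2)$ for every $n\ge 6$, handling small residues of $n$ separately and then giving a general periodic construction. The key structural observation is that $C(n;1,2)$ decomposes into a cyclic sequence of triangles: the vertices $0,1,\ldots,n-1$ arranged around the cycle, with edges $\{i,i+1\}$ and $\{i,i+2\}$ (indices mod $n$), so that each consecutive triple $\{i,i+1,i+2\}$ spans a triangle. Since every triangle is trivially shortcut-free, the only obstructions to semi-transitivity come from longer induced cycles and from acyclicity, so the construction must control directed paths that wind around the graph.

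First I would treat $n$ divisible by $3$. Partition $\{0,\ldots,n-1\}$ into $n/3$ blocks $B_k=\{3k,3k+1,3k+2\}$ and orient so that within each block the arcs go $3k\to 3k+1\to 3k+2$ and $3k\to 3k+2$ (a transitively oriented triangle), and between consecutive blocks orient all four crossing edges from the lower-indexed block to the higher-indexed one, except that the very last block must "turn around" to avoid a global directed cycle. More precisely, I would orient blocks $B_0,\ldots,B_{n/3-1}$ in a "layered" fashion — all arcs between $B_k$ and $B_{k+1}$ point from $B_k$ to $B_{k+1}$ for $k<n/3-1$ — and then route the edges between $B_{n/3-1}$ and $B_0$ so that $B_0$ acts as a source overall; one checks this is acyclic by a topological argument (sources/sinks get peeled off, as in the proof of Theorem~\ref{Circ}). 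For the shortcut check I would invoke Claims~1 and~2 from the proof of Theorem~\ref{Circ}: vertices that are sources all of whose neighbors become sinks after deletion (or vice versa) lie in no shortcut, and one peels the graph down to a forest. For $n\equiv 1,2\pmod 3$ I would use the same block structure with one or two "defect" vertices inserted, orienting the local patch around the defect by hand and verifying (again via Claims~1--2) that no induced $4$-cycle or longer cycle becomes a shortcut.

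An alternative, possibly cleaner, route is to appeal to Theorem~\ref{3-color-graphs}: $C(n;1,2)$ is $3$-colourable whenever $3\mid n$, and even when $3\nmid n$ it is $4$-chromatic but contains $K_4$ only for $n=5$ (excluded here since $n\ge 6$); however $4$-chromaticity blocks a direct application, so one still needs an explicit orientation for $n\equiv 1,2\pmod 3$. I expect the main obstacle to be exactly this non-$3$-colourable case: the "wrap-around" of the triangle-chain forces a conflict between keeping the orientation acyclic and keeping every long induced cycle from becoming a shortcut, and one has to verify that the finitely many induced cycles passing through the defect region (there are only a bounded number of combinatorial types, independent of $n$) are all handled. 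Once the defect patch is orientated correctly, the periodic part is routine, and the peeling argument via Claims~1 and~2 finishes the proof.
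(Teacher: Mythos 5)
The heart of Lemma~\ref{Circs} is precisely the part your proposal leaves unspecified, and the one piece of the construction you do pin down is broken. For $3\mid n$ your fallback via Theorem~\ref{3-color-graphs} is fine (colour by residue mod $3$), but for $n\equiv 1,2\pmod 3$ the graph is $4$-chromatic, so everything rests on the explicit orientation of the ``defect''/wrap-around patch, which you defer to ``orienting by hand and verifying''. Worse, the layered block orientation you describe for the periodic part, with all arcs between consecutive blocks pointing forward and the seam edges oriented out of $B_0$ ($0\rightarrow n-2$, $0\rightarrow n-1$, $1\rightarrow n-1$), already contains a shortcut: the orientation has the directed path $0\rightarrow 1\rightarrow 2\rightarrow\cdots\rightarrow n-1$, the arc $0\rightarrow n-1$ shortcuts it, and the path vertices do not induce a clique (e.g.\ $1$ and $4$ are non-adjacent). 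Avoiding a directed cycle at the seam is not enough; you must also destroy every long directed path whose endpoints are adjacent, and nothing in your sketch does that.

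Two further claims in the sketch are unsound. First, shortcuts are not the same as induced cycles of length at least $4$: a shortcut is a long path plus a shortcutting arc, and the induced subgraph may contain chords, so ``checking induced $4$-cycles or longer cycles'' is not a valid reduction. Second, the assertion that only a bounded number of combinatorial types of shortcut pass through the defect region, independently of $n$, fails for exactly the reason above: the long path of a shortcut may wind through the entire graph (its endpoints only need to differ by $1$ or $2$ modulo $n$), as the shortcut exhibited in your own layered orientation shows. The paper's proof avoids all of this with a single uniform construction for every $n\ge 6$ (no case split on $n\bmod 3$): orient the chain $\{0,\ldots,n-3\}$ from lower to higher index, and orient the seven remaining edges as $1\rightarrow n-1$, $0\rightarrow n-1$, $0\rightarrow n-2$, $n-2\rightarrow n-4$, $n-2\rightarrow n-3$, $n-2\rightarrow n-1$, $n-1\rightarrow n-3$. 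Then a shortcutting arc inside the chain spans at most two indices, so no shortcut lives there, and since the vertex $0$ is a source and all directed paths leaving $n-2$ (and, by the symmetry that reverses all arcs, $n-1$) have length at most $2$, only finitely many short configurations through the seam need checking, none of which closes with an existing arc. If you want to salvage your approach, you must redesign the seam so that the two wrap vertices feed backwards into the end of the chain (as the paper's $n-2\rightarrow n-4$, $n-2\rightarrow n-3$, $n-1\rightarrow n-3$ do), and then supply the actual shortcut-freeness verification rather than appealing to a bounded-type argument.
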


\begin{proof}
Consider a circulant graph $G=C(n;1,2)$ with the vertex set $V=\{0,1,\ldots, n-1\}$. Orient the edges of the subgraph induced by the subset $V_0=\{0,1,\ldots, n-3\}$ from lowest to highest (i.e. $0\rightarrow 1$, $0\rightarrow2$, $1\rightarrow2$, $1\rightarrow 3$, etc) and set the orientation of the remaining seven edges as follows:  
$1\rightarrow n-1,\ 0\rightarrow n-1,\ 0\rightarrow n-2,\ n-2\rightarrow n-4,\ n-2\rightarrow n-3,\ n-2\rightarrow n-1,\ n-1\rightarrow n-3.$
It is easy to see that the orientation is acyclic. Assume that there is a shortcut $v_0\rightarrow \cdots \rightarrow v_k$ with a shortcutting arc 
$v_0\rightarrow v_k$ where $k\ge 3$. Clearly, the shortcut cannot lie in $V_0$ since otherwise for the shortcutting arc we have $k\le 2$ by the definition of the circulant, a contradiction with $k\ge 3$. So, $n-2$ or $n-1$ must be in the shortcut. By symmetry, we may assume that the shortcut contains $n-2$ (otherwise, reverse all arcs and swap $n-1$ with $n-2$ and $i$ with $n-3-i$ for all $i=0,\ldots,n-3$). Since the longest path outgoing from $n-2$ has length $2$, $v_0\ne n-2$. But then the shortcut must contain the arc $0\rightarrow n-2$. Since $0$ is a source, we have $v_0=0,\ v_1=n-2$. There are only two paths of length at least $3$ starting with the arc $0\rightarrow n-2$ (namely, $0 \rightarrow n-2 \rightarrow n-1 \rightarrow n-3$ and $0 \rightarrow n-2 \rightarrow n-4 \rightarrow n-3$). But in both cases $G$ does not contain the shortcutting arc $0\rightarrow n-3$. So, the presented orientation is semi-transitive.
\end{proof}

\begin{remark} If $n=5$ then the orientation in Lemma~\ref{Circs} provides a transitive orientation of $K_5=C(5;1,2)$. \end{remark}

\subsection{Toft's graphs}

Another nice example of 4-chromatic semi-transitive graphs of girth 4 is given by 
Toft's graphs $T_n$ that were introduced in \cite{Toft} as first instances of dense 4-critical graphs (see \cite{Peg} for various constructions of dense critical graphs).

Let $n>3$ be odd. The construction of Toft's graph $T_n$ is as follows. It has a vertex set $V=A_1\cup A_2\cup A_3\cup A_4$ of $4n$ vertices where $A_1$ and $A_4$ induce odd cycles $C_n$ and $A_2\cup A_3$ induces  the complete bipartite graph $K_{n,n}$ with parts $A_2$ and $A_3$. There is also a perfect matching whose all edges connect either $A_1$ with $A_2$ or $A_3$ with $A_4$.

\begin{theorem} Toft's graph $T_n$ is semi-transitive.
\end{theorem}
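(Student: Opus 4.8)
The plan is to write down an explicit semi-transitive orientation that ``layers'' the four parts $A_1,A_2,A_3,A_4$ from left to right. Fix notation so that $A_1=\{a_0,\dots,a_{n-1}\}$ and $A_4=\{d_0,\dots,d_{n-1}\}$ carry the odd cycles, $A_2=\{b_0,\dots,b_{n-1}\}$ and $A_3=\{c_0,\dots,c_{n-1}\}$, and --- after a relabelling, which costs nothing since $A_2,A_3$ span no internal edges and $K_{n,n}$ is symmetric --- the perfect matching consists of the edges $a_ib_i$ and $c_id_i$. I would orient every matching edge as $a_i\to b_i$ and $c_i\to d_i$, every edge of the complete bipartite graph as $b_i\to c_j$, and each of the two cycles $C_n$ (on $A_1$ and on $A_4$) as the union of two internally disjoint directed paths sharing a single source and a single sink, with the split chosen so that both branches have length at least $2$; this is possible because $n\ge 5$.

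First I would verify acyclicity. Every ``cross'' arc respects the order $A_1\prec A_2\prec A_3\prec A_4$, and no edge of $T_n$ leaves $A_4$, nor leaves $A_3$ except into $A_4$, nor leaves $A_2$ except into $A_3$, nor leaves $A_1$ except into $A_2$; hence any directed cycle would be confined to a single $A_i$, and on $A_1$ and $A_4$ the chosen cycle orientations are acyclic while $A_2,A_3$ are independent sets.

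The heart of the argument uses that $T_n$ is triangle-free (the two cycles have length $n\ge 5$, $K_{n,n}$ is bipartite, and a matching edge is easily checked to lie in no triangle). For a triangle-free graph, an acyclic orientation is semi-transitive if and only if for every arc $u\to v$ the arc itself is the only directed $u$--$v$ path, because a directed path $v_0\to\cdots\to v_k$ with $k\ge 3$ together with an edge $v_0v_k$ is automatically a shortcut ($v_0v_2$ cannot be an edge). So it remains to exclude, for each of the five edge types $a_ia_{i+1}$, $d_id_{i+1}$, $b_ic_j$, $a_ib_i$, $c_id_i$, a second directed path between its endpoints. This is immediate from the layering: a directed path that once leaves $A_1$ (respectively $A_2$, $A_3$) never returns, so a directed path joining two vertices of $A_1$ --- or two vertices of $A_4$ --- stays inside that part's $C_n$, where the one-source/one-sink orientation was designed to contain no directed path of length $\ge 3$ between adjacent vertices; and $b_i$ has $a_i$ as its only in-neighbour while $c_i$ has $d_i$ as its only out-neighbour, which disposes of the remaining three edge types at once.

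The one genuine design choice --- and hence the only step that could misfire --- is the orientation of the two odd cycles: a careless acyclic orientation of $C_n$ (say a Hamiltonian directed path closed by a back-arc) produces a shortcut, so one really needs the one-source/one-sink shape with both branches of length at least $2$. Everything else reduces to the bookkeeping observation ``a directed path cannot escape a layer and come back,'' which I would present as a short case analysis over the five edge types rather than writing out every sub-case.
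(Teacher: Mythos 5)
Your proposal is correct and takes essentially the same route as the paper: the identical layered orientation $A_1\rightarrow A_2\rightarrow A_3\rightarrow A_4$ with the two odd cycles given a one-source/one-sink (hence semi-transitive) orientation, and shortcut-freeness deduced from the fact that a directed path never returns to an earlier layer. The only cosmetic difference is that you filter shortcuts through the triangle-free criterion (each arc must be the unique directed path between its endpoints), whereas the paper argues directly on where the endpoints of a would-be long path can lie.
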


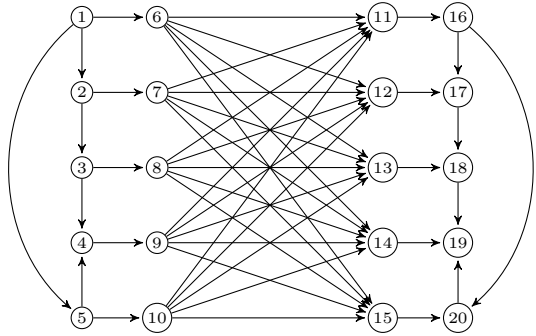
\begin{figure}
\begin{center}
\begin{tikzpicture}[->, >=stealth', shorten >=1pt, node distance=1cm,auto,main 
node/.style={circle,draw,inner sep=1pt,minimum size=2pt}]

\node[main node] (1) {{\tiny 1}};
\node[main node] (2) [below of=1] {{\tiny 2}};
\node[main node] (3) [below of=2] {{\tiny 3}};
\node[main node] (4) [below of=3] {{\tiny 4}};
\node[main node] (5) [below of=4] {{\tiny 5}};

\node[main node] (6) [right of=1]  {{\tiny 6}};
\node[main node] (7) [below of=6] {{\tiny 7}};
\node[main node] (8) [below of=7] {{\tiny 8}};
\node[main node] (9) [below of=8] {{\tiny 9}};
\node[main node] (10) [below of=9] {{\tiny 10}};

\node[main node] (11) [right of=6, xshift=2cm]  {{\tiny 11}};
\node[main node] (12) [below of=11] {{\tiny 12}};
\node[main node] (13) [below of=12] {{\tiny 13}};
\node[main node] (14) [below of=13] {{\tiny 14}};
\node[main node] (15) [below of=14] {{\tiny 15}};

\node[main node] (16) [right of=11]  {{\tiny 16}};
\node[main node] (17) [below of=16] {{\tiny 17}};
\node[main node] (18) [below of=17] {{\tiny 18}};
\node[main node] (19) [below of=18] {{\tiny 19}};
\node[main node] (20) [below of=19] {{\tiny 20}};

\path (1) edge (2) 
              edge (6) 
              edge [bend right=50] (5);
\path (2) edge (3) 
              edge (7); 
\path (3) edge (4) 
              edge (8); 
\path (5) edge (4);
\path (4) edge (9); 
\path (5) edge (10); 
\path (11) edge (16);
\path (16)  edge (17) 
              edge [bend left=50] (20);
\path (12) edge (17);
\path (17) edge (18); 
\path (13) edge (18);
\path (18)  edge (19); 
\path (14) edge (19);
\path (20)  edge (19); 
\path (15) edge (20); 
\path (6) edge (11) 
              edge (12)
              edge (13)
              edge (14)
              edge (15); 
\path (7) edge (11) 
              edge (12)
              edge (13)
              edge (14)
              edge (15); 
\path (8) edge (11) 
              edge (12)
              edge (13)
              edge (14)
              edge (15); 
\path (9) edge (11) 
              edge (12)
              edge (13)
              edge (14)
              edge (15); 
\path (10) edge (11) 
              edge (12)
              edge (13)
              edge (14)
              edge (15); 

\end{tikzpicture}
\caption{A semi-transitive orientation of Toft's graph $T_5$}\label{Toft-T5-sem}
\end{center}
\end{figure}

\begin{proof} A semi-transitive orientation of $T_n$ can be constructed as follows. Every arc $uv$ where $u\in A_i$ and $v\in A_{i+1}$ for any $i\in \{1,2,3\}$ is directed $u\rightarrow v$. 
The cycles $A_1$ and $A_4$ are oriented semi-transitively in an arbitrary way (e.~g. by  arranging in each of them two disjoint directed paths of lengths $2$ and $n-2$ starting in a same node). An example of Toft's graph $T_5$ and its orientation is shown in Fig.~\ref{Toft-T5-sem}.

Clearly, this orientation is acyclic. Assume, there is a shortcut $C$  with a long path $v_0\rightarrow 
\cdots \rightarrow v_k$. Then either $v_0, v_k\in A_i$ for some $i\in \{1,2,3,4\}$ or $v_0\in A_i, v_k\in A_{i+1}$ for some $i\in \{1,2,3\}$. The first case is impossible since the sets $A_2$ and $A_3$ are independent and the orientations of $A_1$ and $A_4$ are semi-transitive. The second case cannot occur since all vertices form $A_2$ and $A_3$ have degree $1$ in the subgraphs induced by $A_1\cup A_2$ and $A_3\cup A_4$, respectively, and the subgraph induced by $A_2\cup A_3$ has no directed paths of length more than $1$. Therefore, the presented orientation is semi-transitive.
\end{proof}

\section{Open problems}\label{sec4}

In this paper we presented examples of non-semi-transitive triangle-free graphs of girth 4, namely the Gr\"otzsch graph, the Chv\'atal graph, and the Chv\'atal graph without certain edge. However, for higher girths the similar existence question is still open.

\begin{problem}\label{prob1} Do there exist non-semi-transitive graphs of girth $g$ for every $g\geq 5$? \end{problem}

We also presented examples of semi-transitive $k$-chromatic graphs of girth $k$ for $k=4$. Finding similar explicit instances could be of interest for larger $k$, especially in terms of minimality according to different criteria.

\begin{problem}\label{prob2} Do there exist semi-transitive $k$-chromatic graphs of girth $k$ for every $k\geq 5$? If yes, then are there regular or vertex-transitive examples? What is the minimum number of vertices and/or edges in such graphs? How dense can they be?\end{problem}

Problem~\ref{prob2} is some kind of a complement question to Problem~\ref{prob1}, so at least one of these problems must have a positive answer. However, we conjecture that the answer is positive for both of them.

Finally, it would be interesting to extend the results of Lemma~\ref{Circs}. Note, that in general the circulants may be not semi-transitive. For instance, $C(14;1,3,4,5)$ is not \cite{G}. But is this true for $C(n;1,\ldots,k)$?

\begin{problem} Are all circulants $C(n;1,2,\ldots,k)$ semi-transitive? What about circulants $C(n; t, t+1,\ldots, k)$ for some integers $k$ and $t$ satisfying $k - t > 1$?\end{problem}
\medskip 

\noindent
{\bf Anknowledgements. } The authors are grateful to the unknown referees for their valuable comments and suggestions. The work of the second author was partially supported by the program of fundamental scientific researches of the SB RAS, project 0314--2019--0014.

\section*{Appendix. Proof of Theorem~\ref{Chv-thm}}

Our proof of non-semi-transitivity of the Chv\'atal graph uses symmetries and Lemma~\ref{lemma}. It results in considering 13 partially oriented copies of the graph that can be drawn to check our arguments. 
Note that non-semi-transitivity of the Chv\'atal graph can be easily checked using the software \cite{G}. To make text of the proof as short as  possible, we use the following brief notation introduced in \cite{AKM15}: 
\begin{itemize}
\item ``MC $X$'' means ``Move to (consider) the partially oriented copy $X$ (obtained earlier)'';
\item ``C$x_1\ldots x_k$'' stands for ``Apply Lemma~\ref{lemma} to a partially directed cycle $x_1\ldots x_k$ (and get some new arcs)'';
\item ``B$xy$ (NC $X$)'' denotes ``Branch on the arc $xy$: if it goes $y\rightarrow x$, create a copy $X$ (to be considered later); otherwise, put the orientation $x\rightarrow y$ and continue the analysis of the current copy'';
\item ``S$x_1\ldots x_k$'' means ``The vertices $x_1,\ldots,x_k$ induce a shortcut, a contradiction''. 
\end{itemize}

\begin{figure}
\begin{center}
\begin{tabular}{cc}
\begin{tikzpicture}[node distance=1cm,auto,main node/.style={circle,draw,inner sep=2.5pt,minimum size=4pt}]

\node[main node] (9) {{\tiny 9}};
\node[main node] (8) [left of=9] {{\tiny 8}};
\node[main node] (10) [below right of=9] {{\tiny 10}};
\node[main node] (7) [below left of=8] {{\tiny 7}};
\node[main node] (6) [below of=7] {{\tiny 6}};
\node[main node] (5) [below right of=6] {{\tiny 5}};
\node[main node] (11) [below of=10] {{\tiny 11}};
\node[main node] (12) [below left of=11] {{\tiny 12}};
\node[main node] (2) [above left of=8,xshift=-1cm,yshift = 0.2cm] {{\tiny 2}};
\node[main node] (3) [above right of=9,xshift=1cm,yshift = 0.2cm] {{\tiny 3}};
\node[main node] (1) [below left of=5,xshift=-1cm,yshift = -0.2cm] {{\tiny 1}};
\node[main node] (4) [below right of=12,xshift=1cm,yshift = -0.2cm] {{\tiny 4}};
\node (x) [below right of=1,xshift=1.4cm] {A};

\path
(2) [->,>=stealth', shorten >=1pt] edge (3);
\path
(3) [->,>=stealth', shorten >=1pt] edge (4);
\path
(4) [->,>=stealth', shorten >=1pt] edge (5);
\path
(4) [->,>=stealth', shorten >=1pt] edge (10);
\path
(2) [->,>=stealth', shorten >=1pt] edge (1);
\path
(1) [->,>=stealth', shorten >=1pt] edge (4);

\path
(5) edge (6)
     edge (12)
     edge (9);
\path
(7) edge (6)
     edge (8)
     edge (11);
\path
(9) edge (8)
     edge (10);
\path
(11) edge (10)
     edge (12)
     edge (7);
\path
(8) edge (12);
\path
(6) edge (10);

\path
(1) edge (2)
     edge (4)
     edge (12)
     edge (7);

\path
(3) edge (2)
     edge (4)
     edge (11)
     edge (8);
\path
(2) edge (6)
     edge (9);

\path
(4) edge (5)
     edge (10);

\end{tikzpicture}

& 

\begin{tikzpicture}[node distance=1cm,auto,main node/.style={circle,draw,inner sep=2.5pt,minimum size=4pt}]

\node[main node] (9) {{\tiny 9}};
\node[main node] (8) [left of=9] {{\tiny 8}};
\node[main node] (10) [below right of=9] {{\tiny 10}};
\node[main node] (7) [below left of=8] {{\tiny 7}};
\node[main node] (6) [below of=7] {{\tiny 6}};
\node[main node] (5) [below right of=6] {{\tiny 5}};
\node[main node] (11) [below of=10] {{\tiny 11}};
\node[main node] (12) [below left of=11] {{\tiny 12}};
\node[main node] (2) [above left of=8,xshift=-1cm,yshift = 0.2cm] {{\tiny 2}};
\node[main node] (3) [above right of=9,xshift=1cm,yshift = 0.2cm] {{\tiny 3}};
\node[main node] (1) [below left of=5,xshift=-1cm,yshift = -0.2cm] {{\tiny 1}};
\node[main node] (4) [below right of=12,xshift=1cm,yshift = -0.2cm] {{\tiny 4}};
\node (x) [below right of=1,xshift=1.4cm] {B};

\path
(2) [->,>=stealth', shorten >=1pt] edge (1);
\path
(2) [->,>=stealth', shorten >=1pt] edge (3);
\path
(3) [->,>=stealth', shorten >=1pt] edge (4);
\path
(1) [->,>=stealth', shorten >=1pt] edge (4);
\path
(4) [->,>=stealth', shorten >=1pt] edge (5);
\path
(10) [->,>=stealth', shorten >=1pt] edge (4);

\path
(5) edge (6)
     edge (12)
     edge (9);
\path
(7) edge (6)
     edge (8)
     edge (11);
\path
(9) edge (8)
     edge (10);
\path
(11) edge (10)
     edge (12)
     edge (7);
\path
(8) edge (12);
\path
(6) edge (10);

\path
(1) edge (2)
     edge (4)
     edge (12)
     edge (7);

\path
(3) edge (2)
     edge (4)
     edge (11)
     edge (8);
\path
(2) edge (6)
     edge (9);

\path
(4) edge (5)
     edge (10);

\end{tikzpicture}

\end{tabular}
\caption{Two partial orientations, $A$ and $B$, of the Chv\'atal graph}\label{Chvatal-graph-dir3}
\end{center}
\end{figure}
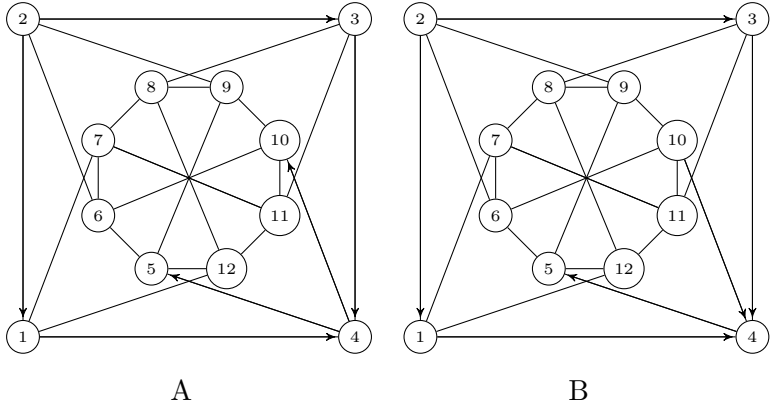

\noindent
For instance, the string  ``MC B, C1234, C23456, ..., S98(12)5;'' below means: ``Consider $B$ (in Fig.~\ref{Chvatal-graph-dir1}), by Lemma~\ref{lemma} in the cycle 1234 we must have $2\rightarrow 1$ and $1\rightarrow 4$, in the cycle 23456 we must have $2\rightarrow 6$ and $6\rightarrow 5$, ..., a contradiction is obtained with the cycle 98(12)5 being a shortcut''.

\begin{figure}
\begin{center}
\begin{tabular}{cc}
\begin{tikzpicture}[node distance=1cm,auto,main node/.style={circle,draw,inner sep=2.5pt,minimum size=4pt}]

\node[main node] (9) {{\tiny 9}};
\node[main node] (8) [left of=9] {{\tiny 8}};
\node[main node] (10) [below right of=9] {{\tiny 10}};
\node[main node] (7) [below left of=8] {{\tiny 7}};
\node[main node] (6) [below of=7] {{\tiny 6}};
\node[main node] (5) [below right of=6] {{\tiny 5}};
\node[main node] (11) [below of=10] {{\tiny 11}};
\node[main node] (12) [below left of=11] {{\tiny 12}};
\node[main node] (2) [above left of=8,xshift=-1cm,yshift = 0.2cm] {{\tiny 2}};
\node[main node] (3) [above right of=9,xshift=1cm,yshift = 0.2cm] {{\tiny 3}};
\node[main node] (1) [below left of=5,xshift=-1cm,yshift = -0.2cm] {{\tiny 1}};
\node[main node] (4) [below right of=12,xshift=1cm,yshift = -0.2cm] {{\tiny 4}};
\node (x) [below right of=1,xshift=1.4cm] {C};

\path
(2) [->,>=stealth', shorten >=1pt] edge (1);
\path
(2) [->,>=stealth', shorten >=1pt] edge (3);
\path
(3) [->,>=stealth', shorten >=1pt] edge (4);
\path
(1) [->,>=stealth', shorten >=1pt] edge (4);
\path
(5) [->,>=stealth', shorten >=1pt] edge (4);
\path
(10) [->,>=stealth', shorten >=1pt] edge (4);

\path
(5) edge (6)
     edge (12)
     edge (9);
\path
(7) edge (6)
     edge (8)
     edge (11);
\path
(9) edge (8)
     edge (10);
\path
(11) edge (10)
     edge (12)
     edge (7);
\path
(8) edge (12);
\path
(6) edge (10);

\path
(1) edge (2)
     edge (4)
     edge (12)
     edge (7);

\path
(3) edge (2)
     edge (4)
     edge (11)
     edge (8);
\path
(2) edge (6)
     edge (9);

\path
(4) edge (5)
     edge (10);

\end{tikzpicture}

& 

\begin{tikzpicture}[node distance=1cm,auto,main node/.style={circle,draw,inner sep=2.5pt,minimum size=4pt}]

\node[main node] (9) {{\tiny 9}};
\node[main node] (8) [left of=9] {{\tiny 8}};
\node[main node] (10) [below right of=9] {{\tiny 10}};
\node[main node] (7) [below left of=8] {{\tiny 7}};
\node[main node] (6) [below of=7] {{\tiny 6}};
\node[main node] (5) [below right of=6] {{\tiny 5}};
\node[main node] (11) [below of=10] {{\tiny 11}};
\node[main node] (12) [below left of=11] {{\tiny 12}};
\node[main node] (2) [above left of=8,xshift=-1cm,yshift = 0.2cm] {{\tiny 2}};
\node[main node] (3) [above right of=9,xshift=1cm,yshift = 0.2cm] {{\tiny 3}};
\node[main node] (1) [below left of=5,xshift=-1cm,yshift = -0.2cm] {{\tiny 1}};
\node[main node] (4) [below right of=12,xshift=1cm,yshift = -0.2cm] {{\tiny 4}};
\node (x) [below right of=1,xshift=1.4cm] {D};

\path
(2) [->,>=stealth', shorten >=1pt] edge (1);
\path
(2) [->,>=stealth', shorten >=1pt] edge (3);
\path
(4) [->,>=stealth', shorten >=1pt] edge (3);
\path
(4) [->,>=stealth', shorten >=1pt] edge (1);
\path
(5) [->,>=stealth', shorten >=1pt] edge (4);
\path
(10) [->,>=stealth', shorten >=1pt] edge (4);

\path
(5) edge (6)
     edge (12)
     edge (9);
\path
(7) edge (6)
     edge (8)
     edge (11);
\path
(9) edge (8)
     edge (10);
\path
(11) edge (10)
     edge (12)
     edge (7);
\path
(8) edge (12);
\path
(6) edge (10);

\path
(1) edge (2)
     edge (4)
     edge (12)
     edge (7);

\path
(3) edge (2)
     edge (4)
     edge (11)
     edge (8);
\path
(2) edge (6)
     edge (9);

\path
(4) edge (5)
     edge (10);

\end{tikzpicture}

\end{tabular}
\caption{Two partial orientations, $C$ and $D$, of the Chv\'atal graph}\label{Chvatal-graph-dir1}
\end{center}
\end{figure}
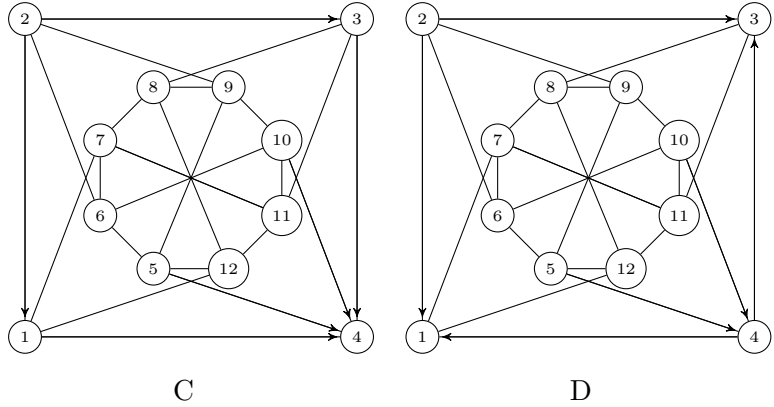

\begin{proof}[Proof of Theorem~\ref{Chv-thm}] Suppose that the Chv\'atal graph $H$ (see Fig.~\ref{Chvatal-graph}) can be oriented semi-transitively. 
By lemma~\ref{lemma}, exactly two arcs of the cycle 1234 are directed clockwise. So, by symmetry, we may assume that this cycle has either arcs $2\rightarrow 1, 2\rightarrow 3, 1\rightarrow 4$ and $3\rightarrow 4$ or arcs $2\rightarrow 1, 2\rightarrow 3, 4\rightarrow 1$ and $4\rightarrow 3$. We also branch on the orientation of the number of outgoing from the vertex $4$ edges among $45$ and $4(10)$: it can be 0, 1, or 2. The corresponding cases induce 6 initial copies (partial orientations) from $A$ to $F$, presented in in Fig.~\ref{Chvatal-graph-dir3}, Fig.~\ref{Chvatal-graph-dir1}, and Fig.~\ref{Chvatal-graph-dir2}.
Next we consider all of them starting from $A$ and using the notation introduced above. In each case, we will obtain a contradiction showing that $H$ cannot be oriented semi-transitively. \\

\begin{small}
\noindent
- MC $A$, C1234; due to the symmetry with respect to the diagonal 2--4, we can assume existence of the arc $8\rightarrow 7$;  C234(10)9, C145(12), C34(10)(11), C871(12), C2176, C67(11)(10), C387(11), C8(12)59, S2389;\\
-  MC $B$, C1234, C23456, C2659, C(10)456, C145(12), B67 (NC $B_1$), C(10)67(11), C(10)(11)34, C3(11)78, C2389, C1(12)87, S98(12)5; \\
- MC $B_1$, C2176, C71(12)8, C8(12)59, C2389, C783(11), C(11)34(10), S7(11)(10)6;\\ 
-  MC $C$, C1234; we can assume presence of the arcs $2\rightarrow 6$ and $2\rightarrow 9$ (otherwise, changing direction of all arcs results in a copy $A$ or $B$), and also presence of the edge $8\rightarrow 7$ (because of the symmetry with respect to the diagonal $2--4$); we branch on two arcs, 17 and 38, simultaneously: if $3\rightarrow 8$ and $1\rightarrow 7$ NC $C_1$; if $8\rightarrow 3$ and $7\rightarrow 1$ NC $C_2$; if $8\rightarrow 3$ and $1\rightarrow 7$ NC $C_3$; if $3\rightarrow 8$ and $7\rightarrow 1$ then S23871; \\
-  MC $C_1$, C2389, C387(11), C34(10)(11), C(10)(11)76, C(10)654, C2956, C598(12), C(12)871, S5(12)14;\\
-  MC $C_2$, C8371, C871(12), C5(12)14, C8(12)59, C954(10), C29(10)6, C76(10)(11), C87(11)3, S3(11)(10)4;\\
-  MC $C_3$, C2176, C2983, B9(10) (NC $C_4$), C9(11)45, C895(12), C8(12)17, S1(12)54;\\
-  MC $C_4$, C(10)954, C2659, C56(10)9, C(10)67(11), C(10)(11)34, S83(11)7; \\
-  MC $D$; using symmetry with respect to the diagonal 2--4 we can assume presence of the arc $8\rightarrow 7$; C(10)43(11),  C541(12), C(11)387, C(10)(11)76, B62 (NC $D_1$), C(10)629, C6217, C871(12), C9238, C98(12)5, S(10)954;\\
-  MC $D_1$, C2671, C(12)178, C5(12)89, C59(10)6, C(10)926, S2983;\\
-  MC $E$; we can assume that $6\rightarrow 2$ and $9\rightarrow 2$ are not arcs at the same time (otherwise we get the graph $D$); C(10)43(11), C(10)459, C(10)456; since the presence of  $6\rightarrow 2$ and $2\rightarrow 9$, or the presence of  $9\rightarrow 2$ and $2\rightarrow 6$ gives S(10)926, while the presence of $6\rightarrow 2$ and $9\rightarrow 2$ is forbidden above, we have $2\rightarrow 6$ and $2\rightarrow 9$; B83 (NC $E_1$), C2983, C895(12), C41(12)5, C8(12)17, C2671, C(10)(11)76, S87(11)3; \\
-  MC $E_1$, C(11)387, C(10)(11)76, C2671, C178(12), C95(12)8, S41(12)5; \\
-  MC $F$; note that the vertex 2 must be a source (the in-degree is 0), and 1 and 3 must be sinks (the out-degree is 0), since otherwise after renaming the vertices, and if necessary reversing the directions of all arcs, we would obtain $D$ or $E$; using symmetry with respect to the diagonal 2--4, we can assume $8\rightarrow 7$; C45(12)1, C2671, C87(12)3, C(11)76(10), C4(10)65, C2956, C(12)598, S(12)871. 
\end{small}

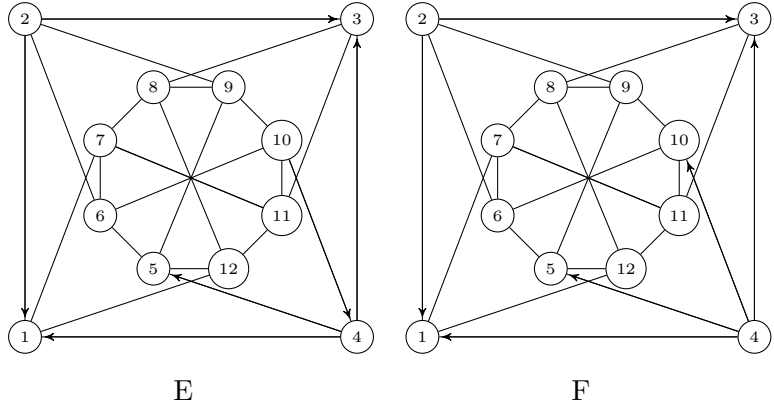
\begin{figure}
\begin{center}
\begin{tabular}{cc}

\begin{tikzpicture}[node distance=1cm,auto,main node/.style={circle,draw,inner sep=2.5pt,minimum size=4pt}]

\node[main node] (9) {{\tiny 9}};
\node[main node] (8) [left of=9] {{\tiny 8}};
\node[main node] (10) [below right of=9] {{\tiny 10}};
\node[main node] (7) [below left of=8] {{\tiny 7}};
\node[main node] (6) [below of=7] {{\tiny 6}};
\node[main node] (5) [below right of=6] {{\tiny 5}};
\node[main node] (11) [below of=10] {{\tiny 11}};
\node[main node] (12) [below left of=11] {{\tiny 12}};
\node[main node] (2) [above left of=8,xshift=-1cm,yshift = 0.2cm] {{\tiny 2}};
\node[main node] (3) [above right of=9,xshift=1cm,yshift = 0.2cm] {{\tiny 3}};
\node[main node] (1) [below left of=5,xshift=-1cm,yshift = -0.2cm] {{\tiny 1}};
\node[main node] (4) [below right of=12,xshift=1cm,yshift = -0.2cm] {{\tiny 4}};
\node (x) [below right of=1,xshift=1.4cm] {E};

\path
(10) [->,>=stealth', shorten >=1pt] edge (4);
\path
(4) [->,>=stealth', shorten >=1pt] edge (5);
\path
(2) [->,>=stealth', shorten >=1pt] edge (1);
\path
(2) [->,>=stealth', shorten >=1pt] edge (3);
\path
(4) [->,>=stealth', shorten >=1pt] edge (1);
\path
(4) [->,>=stealth', shorten >=1pt] edge (3);

\path
(5) edge (6)
     edge (12)
     edge (9);
\path
(7) edge (6)
     edge (8)
     edge (11);
\path
(9) edge (8)
     edge (10);
\path
(11) edge (10)
     edge (12)
     edge (7);
\path
(8) edge (12);
\path
(6) edge (10);

\path
(1) edge (2)
     edge (4)
     edge (12)
     edge (7);

\path
(3) edge (2)
     edge (4)
     edge (11)
     edge (8);
\path
(2) edge (6)
     edge (9);

\path
(4) edge (5)
     edge (10);

\end{tikzpicture}

& 

\begin{tikzpicture}[node distance=1cm,auto,main node/.style={circle,draw,inner sep=2.5pt,minimum size=4pt}]

\node[main node] (9) {{\tiny 9}};
\node[main node] (8) [left of=9] {{\tiny 8}};
\node[main node] (10) [below right of=9] {{\tiny 10}};
\node[main node] (7) [below left of=8] {{\tiny 7}};
\node[main node] (6) [below of=7] {{\tiny 6}};
\node[main node] (5) [below right of=6] {{\tiny 5}};
\node[main node] (11) [below of=10] {{\tiny 11}};
\node[main node] (12) [below left of=11] {{\tiny 12}};
\node[main node] (2) [above left of=8,xshift=-1cm,yshift = 0.2cm] {{\tiny 2}};
\node[main node] (3) [above right of=9,xshift=1cm,yshift = 0.2cm] {{\tiny 3}};
\node[main node] (1) [below left of=5,xshift=-1cm,yshift = -0.2cm] {{\tiny 1}};
\node[main node] (4) [below right of=12,xshift=1cm,yshift = -0.2cm] {{\tiny 4}};
\node (x) [below right of=1,xshift=1.4cm] {F};

\path
(2) [->,>=stealth', shorten >=1pt] edge (1);
\path
(2) [->,>=stealth', shorten >=1pt] edge (3);
\path
(4) [->,>=stealth', shorten >=1pt] edge (3);
\path
(4) [->,>=stealth', shorten >=1pt] edge (1);
\path
(4) [->,>=stealth', shorten >=1pt] edge (5);
\path
(4) [->,>=stealth', shorten >=1pt] edge (10);

\path
(5) edge (6)
     edge (12)
     edge (9);
\path
(7) edge (6)
     edge (8)
     edge (11);
\path
(9) edge (8)
     edge (10);
\path
(11) edge (10)
     edge (12)
     edge (7);
\path
(8) edge (12);
\path
(6) edge (10);

\path
(1) edge (2)
     edge (4)
     edge (12)
     edge (7);

\path
(3) edge (2)
     edge (4)
     edge (11)
     edge (8);
\path
(2) edge (6)
     edge (9);

\path
(4) edge (5)
     edge (10);

\end{tikzpicture}

\end{tabular}
\caption{Two partial orientations, $E$ and $F$, of the Chv\'atal graph}\label{Chvatal-graph-dir2}
\end{center}
\end{figure}

The proof is completed.\end{proof}

\end{document}